\newtheorem{theorem}{Theorem}[section]
\newtheorem{problem}[theorem]{Problem}
\newtheorem{proposition}[theorem]{Proposition}
\newtheorem{corollary}[theorem]{Corollary}
\newtheorem{lemma}[theorem]{Lemma}
\newtheorem{remark}[theorem]{Remark}
\newtheorem{definition}[theorem]{Definition}
\newcommand{\T}{\mathbb{T}}
\newcommand{\Z}{\mathbb{Z}}
\newcommand{\N}{\mathbb{N}}
\def\cont{\mathfrak c}
\def\T{{\mathbb T}}
\def\eps{{\varepsilon}}
\def\I{{\mathbb I}}
\def\Z{{\mathbb Z}}
\def\N{{\mathbb N}}
\def\R{{\mathbb R}}
\def\cont{\mathfrak c}
\begin{document}

\title[$s$-characterized subgroups (II): continued fractions]
{Statistically characterized subgroups of the circle  (II): continued fractions}
\subjclass[2010]{Primary:  22B05, 11J70, Secondary: 40A05} \keywords{ Circle group, continued fraction, diophantine approximations, Natural density, statistical convergence, s-characterized subgroup}

\author{Pratulananda Das, Kumardipta Bose}

\address{Department of Mathematics, Jadavpur University, Kolkata-700032, India}
\email {pratulananda@yahoo.co.in, gabuaktafaltu6ele@hotmail.com}

\begin{abstract}

In this note, we  continue the investigation of the new version of characterized subgroups of the circle group $\T$, namely, "statistically characterized subgroups" (shortly, "s-characterized subgroups") recently introduced in \cite{DDB}. We primarily investigate these subgroups for sequences arising out of continued fraction representation of irrational numbers $\alpha$ in line of \cite{L} and \cite{KL} (followed by \cite{BDMW1}) comparing their main results for this new notion and show that these subgroups are strictly larger in size (so nontrivial) than the corresponding characterized subgroups, having cardinality $\mathfrak{c}$ and containing the subgroup $\langle \alpha \rangle$ and in the process answer the Open Question 6.4 posed in \cite{DDB}.
\end{abstract}
\maketitle

\section{Introduction and preliminaries}
Throughout $\mathbb{R}, \mathbb{Q}$ and $\mathbb{N}$ will stand for the set of all real numbers, the set of all rational numbers and the set of all natural numbers respectively. Also, for $x\in\R$ we denote by $\lfloor x\rfloor$ the greatest integer less than $x$, and by $\{x\}$ we denote its fractional part.

The motivation to study the so called "characterized subgroups" can be traced back to the distribution of
sequences of multiples of a given real number mod 1.  Recall that a sequence of real numbers $(x_n)$ is said to be {\em uniformly distributed mod 1}, if for every $[a, b] \subseteq [0, 1)$ one has
$$ \displaystyle{\lim_{n \rightarrow \infty}} \frac{|\{j: 0 \leq j < n, ~\{x_j\} \in [a, b]\}|}{n} = b - a.$$
In his celebrated results proved in 1916, H. Weyl \cite{W} had investigated the set
$$\textit{W}_{\textbf{u}} = \{x \in [0, 1]: (u_nx)~\mbox{is uniformly distributed mod}~1\}$$
where $\textbf{u} = (u_n) \in \mathbb{Z}^{\mathbb{N}}$. Note that for a number $\alpha \in [0, 1] \setminus \mathbb{Q}$, $\alpha \notin \textit{W}_{\textbf{u}}$ for appropriate choice of $\textbf{u}$. The interest to the case when these sequences are small, actually
null sequences, stems from Harmonic Analysis where $A$-sets (short for Arbault sets) were introduced in \cite{A1}
and stems from trigonometric series (see also \cite{Bu,BuKR,El}).

 The interest for the same type of sets also came from another direction initiated by Armacost \cite{A} which culminated in the study of characterized subgroups, in particular,  for arithmetic sequences which have a rich history (one can see \cite{DGDis} and the very recent paper \cite{DDB}).

We now recall the definition of a characterized subgroup  of $\T$.

\begin{definition}
Let $(a_n)$ be a sequence of integers, the subgroup
$$
t_{(a_n)}(\T) := \{x\in \T: a_nx \to 0\mbox{ in } \T\}.
$$
of $\T$ is called {\em a characterized} $($by $(a_n))$ {\em subgroup} of $\T$.
\end{definition}

The other line of investigation of characterized subgroups have been more number theoretic, the one, using continued fraction.
Let $\alpha$ be an irrational number with the regular continued fraction approximation $\alpha=[a_0; a_1,a_2,...]$. For any $n\in\N$, let $\frac{p_n}{q_n} = [a_0; a_1,a_2,...,a_n]$ be the sequence of convergents and we write $\theta_n=q_n\alpha -p_n$. There have been a lot of interest in the characterized subgroup generated by the sequence of denominators $(q_n)$ i.e. the subgroup $t_{(q_n)}(\T)$. From Theorem 4.3 \cite{KN} it follows that the sequence $(q_n\beta)$ is uniformly distributed modulo one for almost all $\beta \in \mathbb{R}$ in the sense of lebesgue measure. However note that if one considers the convergents $\frac{p_n}{q_n}$ of the continued fraction expansion of $\alpha$ and since $||q_n\alpha||_{\mathbb{Z}} \rightarrow 0$ (where $||.||_{\mathbb{Z}}$ is the distance from the integers), so $\alpha \notin \textit{W}_{\textbf{u}}$ (as mentioned above).

It has been observed already by Eggleston \cite{E} (followed by Larcher \cite{L}) and subsequently by Kraaikamp and Liardet \cite{KL} that
the asymptotic behavior of the sequence $(a_n)$, in particular the property of boundedness,
has a strong impact on the size of $t_{(q_n)}(\T)$ which we can express as the following two observations:
\begin{itemize}
\item[(O1)]  $t_{(q_n)}(\T)$ is at most enumerable if $(a_n)$  is bounded \cite{E}.
\item[(O2)] $|t_{(q_n)}(\T)|=\cont$ if $(a_n)$ is unbounded \cite{KL}.
\end{itemize}

  In a really impressive observation, Larcher \cite{L} proved in 1988 that "if the continued fraction expansion of an irrational $\alpha$ is bounded" (which means boundedness of the sequence $(a_n)$), i.e. in the case (O1) $$\{\beta \in \mathbb{R}: ||q_n\beta||_{\mathbb{Z}} \rightarrow 0\}$$
is nothing but the subgroup $\langle \alpha \rangle + \mathbb{Z}$ of $\mathbb{R}$. In other words, for any irrational $\alpha \in (0, 1)$, $t_{(q_n)}(\T) = \langle \alpha \rangle$ (instead of using the fractional part $\{x_j\}$ or working modulo 1, one can can think of the result in the circle group $\mathbb{R}\setminus \mathbb{Z} = \T$). The result of Larcher, namely Theorem 1, \cite{L} was generalized further in the sense that if $\eta_n = \eta_n(\alpha) = q_n|\theta_n|$ then for $\beta \in \mathbb{R}$, $\beta \in \langle \alpha \rangle + \mathbb{Z}$ if and only if $||q_n\beta|| \in o(\eta_n),~(n \rightarrow \infty)$ (Theorem $1'$ \cite{KL}). In \cite{KL} the authors also considered irrational numbers $\alpha$ with unbounded sequence $(a_n)$ and proved another very interesting result, namely Theorem 3, which forms the basis of the observation (O2). The investigation carried out in \cite{KL} was later strengthened in \cite{BDMW1} where, in particular, all the results were obtained in the circle.

The whole history of development of characterized subgroups can be found in the nice surveys \cite{D_NYC, DGDis} where almost all relevant references can be found. Very recently in \cite{DDB} the notion of characterized subgroups has been modified using the idea of natural density and in particular the consequences have been studied for arithmetic sequences.

For $m,n\in\mathbb{N}$ and $m\leq n$, let $[m, n]$ denotes the set $\{m, m+1, m+2,...,n\}$. By
 $|A|$ we denote the cardinality of a set $A$. The lower and the upper natural densities of $A \subset \mathbb{N}$ are defined by
$$
\underline{d}(A)=\displaystyle{\liminf_{n\to\infty}}\frac{|A\cap [1,n]|}{n} ~~\mbox{and}~~
\overline{d}(A)=\displaystyle{\limsup_{n\to\infty}}\frac{|A\cap [1,n]|}{n}.
$$
If $\underline{d}(A)=\overline{d}(A)$, we say that the natural
density of $A$ exists and it is denoted by $d(A)$.
As usual,
$$
\mathcal{I}_d = \{A \subset \mathbb{N}: d(A) = 0\}
$$
 denotes the ideal of ``natural density zero" sets and $\mathcal{I}_d^*$ is the dual filter i.e. $\mathcal{I}_d^* = \{A \subset \mathbb{N}: d(A) = 1\}$.

 Let us now recall the notion of statistical convergence in the sense of \cite{F,Fr,S,St,Z} (see also \cite{B1,B2} for applications to Number Theory and Analysis).

\begin{definition}\label{Def1}
A sequence of real numbers $(x_n)$ is said to converge to a real number $x_0$ statistically if for any $\eps > 0$, $d(\{n \in \mathbb{N}: |x_n - x_0| \geq \eps\}) = 0$.
\end{definition}

It was proved in \cite{S} that $x_n \to x_0$ statistically  precisely when there exists a subset A of $ \N$ of asymptotic density 0, such that $\displaystyle{\lim_{n \in \N \setminus A}} x_n = x_0$.
Over the years, the notion of statistical convergence has been extended to general topological spaces using open neighborhoods \cite{MK} and in the last three decades a lot of work has been done on the notion of statistical convergence primarily because it extends the notion of usual convergence very naturally preserving many of the basic properties but at the same time including more sequences under its purview.

In order to relax the condition $a_nx \to 0 $ it thus seemed natural to involve the notion of statistical convergence. More precisely, $a_nx \to 0$ statistically means that for every $\eps > 0$ there exists a subset A of $ \N$ of asymptotic density 0, such that $\|a_nx\| < \eps$ for every $n \not \in A$, where  $\|x\|$ denotes the usual norm in $\T$ defined by the length of the shortest  arc connecting $x$ with 0.

 Using this notion, in \cite{DDB} the following definition has been introduced:

\begin{definition}
For a sequence of integers $(a_n)$ the subgroup
\begin{equation}\label{def:stat:conv}
t^s_{(a_n)}(\T) := \{x\in \T: a_nx \to 0\  \mbox{ statistically in }\  \T\}
\end{equation}
of $\T$ is called {\em a statistically characterized} (shortly, {\em an s-characterized}) $($by $(a_n))$ {\em subgroup} of $\T$.
\end{definition}

Clearly for a sequence of integers $(a_n)$, $t^s_{(a_n)}(\T) \supset t_{(a_n)}(\T)$. The following  result from \cite{DDB} justifies the investigation of this new notion of s-characterized subgroups as it has been established that, though in general, larger in size, these subgroups are still essentially topologically nice.\\

\noindent{\bf Theorem A.} \cite{DDB} {\em
 $t^s_{(a_n)}(\T)$ is a $F_{\sigma\delta}$ (hence, Borel) subgroup of $\T$ containing $t_{(a_n)}(\T)$.}\\

In this note, we continue to investigate this new notion but move in the direction of Eggleston, Larcher etc i.e. the statistically characterized subgroups generated by the sequence of denominators $(q_n)$ for the sequence of convergents $(\frac{p_n}{q_n})$ of an irrational number $\alpha$. Since we live in $\T$, we only consider irrational numbers from $[0, 1]$. One of our main observation is about the size of a s-characterized subgroup for any irrational number $\alpha$.\\

\noindent{\bf Theorem B.} (cf. Theorem B \cite{DDB})
Let $\alpha\in(0,1)$ be an irrational number. Then the associated s-characterized subgroup $t^s_{(q_n)}(\T)$ is uncountable.\\

 This theorem again points out the substantial amount of difference between characterized and s-characterized subgroups
by ``breaking" the dichotomy (O1)/(O2).
As a consequence  and some further observations, we obtain that the new subgroup $t^s_{(q_n)}(\T)$ always
differs from the subgroup $t_{(q_n)}(\T)$. In fact we can get the following assertion as a consequence.\\

\noindent{\bf Theorem C.} (cf. Theorem C \cite{DDB}) For any irrational number $\alpha \in (0, 1)$ we have $|t^s_{(q_n)}(\T)\setminus t_{(q_n)}(\T)| = \mathfrak{c}$.\\

We end this section by recalling the well-known inequalities and a canonical representation of real numbers in terms of $\theta_n$ as given below which will be used in our main results.

\begin{proposition}\label{1}\cite{BDS}
The following results  hold:\\
(i) $\frac{1}{(a_{n+1}+2)q_n^2}<|\alpha -\frac{p_n}{q_n}|<\frac{1}{a_{n+1}q_n^2}$;\\
(ii) $\exists \lambda\in(0,1)$ for which $|\theta_{k+j}|<\lambda ^j|\theta_k|$ and $q_{k-j}<\lambda ^jq_k$ for $j\geq 2$;\\
(iii) For any $k,m\in\N$ $\{q_k\theta_m\}=\{q_m\theta_k\}$ where $\{x\}$ denotes the fractional part of $x$.
\end{proposition}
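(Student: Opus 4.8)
The plan is to derive all three items from the standard machinery of regular continued fractions, the whole proof being organized bookkeeping around one master identity. First I would record the basic recurrences $p_n=a_np_{n-1}+p_{n-2}$, $q_n=a_nq_{n-1}+q_{n-2}$, the determinant relation $p_nq_{n-1}-p_{n-1}q_n=(-1)^{n-1}$, and the fact that the complete quotient $\alpha_{n+1}=[a_{n+1};a_{n+2},\dots]$ satisfies $a_{n+1}<\alpha_{n+1}<a_{n+1}+1$ and $\alpha=\frac{\alpha_{n+1}p_n+p_{n-1}}{\alpha_{n+1}q_n+q_{n-1}}$. Substituting the last expression into $\theta_n=q_n\alpha-p_n$ and simplifying with the determinant relation yields the master identity
\begin{equation}\label{eq:master}
\theta_n=\frac{(-1)^n}{\alpha_{n+1}q_n+q_{n-1}},\qquad\text{hence}\qquad |\theta_n|=\frac{1}{\alpha_{n+1}q_n+q_{n-1}}.
\end{equation}
Everything else flows from this together with the monotonicity $q_{n-1}\le q_n$ (strict for $n\ge 2$).

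For (i) I would note $|\alpha-\tfrac{p_n}{q_n}|=|\theta_n|/q_n$ and bound the denominator of \eqref{eq:master} on both sides. Since $\alpha_{n+1}>a_{n+1}$ one has $\alpha_{n+1}q_n+q_{n-1}>a_{n+1}q_n$, giving the upper estimate $|\theta_n|<1/(a_{n+1}q_n)$; since $\alpha_{n+1}<a_{n+1}+1$ and $q_{n-1}\le q_n$ one has $\alpha_{n+1}q_n+q_{n-1}<(a_{n+1}+2)q_n$, giving $|\theta_n|>1/((a_{n+1}+2)q_n)$. Dividing through by $q_n$ produces (i) verbatim.

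For (ii) the key is a uniform two-step contraction for both sequences. For the denominators, $q_{n+2}=a_{n+2}q_{n+1}+q_n\ge q_{n+1}+q_n\ge 2q_n$, whence $q_{k-j}\le 2^{-\lfloor j/2\rfloor}q_k$ after iterating and using monotonicity to absorb an odd last step. For the $\theta$'s I would first extract from the recurrence $\theta_{n+1}=a_{n+1}\theta_n+\theta_{n-1}$ and the sign alternation in \eqref{eq:master} the relation $|\theta_{n-1}|=a_{n+1}|\theta_n|+|\theta_{n+1}|$, together with the strict decay $|\theta_{n+1}|<|\theta_n|$. A short argument then forces the halving: if $|\theta_{n+1}|\ge\frac12|\theta_{n-1}|$, then $|\theta_n|>|\theta_{n+1}|\ge\frac12|\theta_{n-1}|$, so $|\theta_{n+1}|=|\theta_{n-1}|-a_{n+1}|\theta_n|\le|\theta_{n-1}|-|\theta_n|<\frac12|\theta_{n-1}|$, a contradiction; hence $|\theta_{n+1}|<\frac12|\theta_{n-1}|$ and therefore $|\theta_{k+j}|\le 2^{-\lfloor j/2\rfloor}|\theta_k|$. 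It remains to fix a single $\lambda\in(0,1)$ with $2^{-\lfloor j/2\rfloor}<\lambda^j$ for every $j\ge2$; an elementary check shows $\lambda=\tfrac45$ works (the binding case being $j=3$).

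Finally (iii) is immediate: $q_k\theta_m-q_m\theta_k=q_k(q_m\alpha-p_m)-q_m(q_k\alpha-p_k)=q_mp_k-q_kp_m\in\Z$, so $q_k\theta_m$ and $q_m\theta_k$ differ by an integer and hence share the same fractional part. The only genuinely delicate point in the whole argument is the two-step halving of $|\theta_n|$ together with the subsequent choice of one $\lambda$ that is uniform in $j$ and common to both sequences; since no uniform single-step contraction exists when the partial quotients are small, the restriction $j\ge2$ in the statement is precisely what makes the two-step estimate available.
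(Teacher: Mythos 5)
Your proof is correct: the master identity $\theta_n=(-1)^n/(\alpha_{n+1}q_n+q_{n-1})$ gives (i) exactly as you say, the two-step halving argument (using $|\theta_{n-1}|=a_{n+1}|\theta_n|+|\theta_{n+1}|$ and $q_{n+2}\ge 2q_n$) is sound, your identification of $j=3$ as the binding case is right (one needs $\lambda\ge 2^{-1/3}\approx 0.794$, so $\lambda=\tfrac45$ works), and (iii) is the correct one-line integer-difference computation. There is nothing in the paper to compare against: Proposition \ref{1} is stated there without proof, being quoted from \cite{BDS}, so your argument simply supplies the standard self-contained derivation of these classical continued-fraction facts.
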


\begin{proposition}\label{2}\cite{BDS}\cite{S}
Any $\beta\in(-\alpha ,1-\alpha )$ has a unique representation in the form
$$
\beta=\sum\limits_{k=0}^{\infty}d_k\theta_k
$$
where $0\leq d_0<a_1$, $0\leq d_k\leq a_{k+1}$ and $d_k=a_{k+1}\Rightarrow d_{k-1}=0$ for $k\geq 1$. Furthermore $d_{2i}\neq a_{2i+1}$ for infinitely many $i$.
\end{proposition}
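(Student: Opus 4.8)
The plan is to recognize the stated expansion as the Ostrowski numeration attached to $\alpha$ and to establish existence and uniqueness through the associated system of nested intervals. First I would record the recurrence $\theta_{n+1}=a_{n+1}\theta_n+\theta_{n-1}$ (with the conventions $\theta_{-1}=-1$, $\theta_0=\alpha$), the sign pattern that $\theta_n$ has sign $(-1)^n$, and $|\theta_n|\to 0$, all of which follow from the standard convergent recurrences and from Proposition \ref{1}(ii). Rewriting the recurrence as $a_{n+1}\theta_n=\theta_{n+1}-\theta_{n-1}$ and telescoping separately over the even and the odd indices yields the two key identities $\sum_{j\ge 0}a_{2j+1}\theta_{2j}=1$ and $\sum_{j\ge 0}a_{2j+2}\theta_{2j+1}=-\alpha$. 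These already explain the target interval: pushing admissible digits to their extremes (even digits maximal and odd digits $0$, and conversely) shows that the admissible series range over exactly $(-\alpha,1-\alpha)$, the strict bound $d_0<a_1$ being exactly what produces the upper endpoint $1-\alpha$ rather than $1$.

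For existence I would run the greedy algorithm. Set $\beta_0=\beta$ and, having produced $\beta_n$, choose $d_n$ to be the unique integer for which $\beta_{n+1}:=\beta_n-d_n\theta_n$ lies in the tail interval attached to the index $n+1$. Using the telescoping identities one computes these tail intervals explicitly: the tail $\sum_{k\ge m}d_k\theta_k$ of any admissible string lies in $(-|\theta_m|,|\theta_{m-1}|)$ when $m$ is even and in $(-|\theta_{m-1}|,|\theta_m|)$ when $m$ is odd. A short case check (splitting on the parity of $n$ and on whether the chosen $d_n$ equals $a_{n+1}$) shows that this choice keeps each remainder in the correct interval, that the constraints $0\le d_n\le a_{n+1}$ and $d_n=a_{n+1}\Rightarrow d_{n-1}=0$ hold automatically, and that $|\beta_n|<|\theta_{n-1}|\to 0$, so that $\sum_{k\ge 0}d_k\theta_k=\beta$.

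For uniqueness I would pass to the cylinder sets: to a finite admissible block $(d_0,\dots,d_{n-1})$ I attach the set of $\beta$ whose expansion begins with that block. The telescoping identities give the endpoints of each cylinder, and from them I would verify that at every level the cylinders are interior-disjoint subintervals whose union is all of $(-\alpha,1-\alpha)$ apart from countably many shared endpoints, and whose lengths are $O(|\theta_{n-1}|)\to 0$. Thus every non-endpoint $\beta$ sits in a unique decreasing tower of cylinders, which forces its digits; equivalently, looking at the least index $n$ of disagreement between two expansions reduces the matter to the identity $(d_n-d_n')\theta_n=\sum_{k>n}(d_k'-d_k)\theta_k$, whose two sides cannot balance once the tail bounds above are combined with the coupling condition. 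The only residual cases are the strings that are eventually digit-maximal on the even places; these sum to an excluded endpoint (such as $1-\alpha$, or a cylinder boundary), and the final clause $d_{2i}\neq a_{2i+1}$ for infinitely many $i$ is precisely the normalization that discards them, singling out one representative exactly as forbidding a terminating $\dots 9 9 9\dots$ tail does in the decimal system.

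The main obstacle I anticipate is the bookkeeping forced by the coupling $d_k=a_{k+1}\Rightarrow d_{k-1}=0$ in tandem with the alternating signs of the $\theta_k$: it is this coupling that makes the crude tail bound $|\theta_n|+|\theta_{n+1}|$ too weak, and it must be used to show that adjacent cylinders abut without genuine overlap. Getting the endpoint accounting exactly right — so that the cylinders tile $(-\alpha,1-\alpha)$ and the ``infinitely many $d_{2i}\neq a_{2i+1}$'' condition selects a unique admissible expansion at the shared endpoints — is the delicate part; by contrast the existence half and the convergence $|\theta_n|\to 0$ are comparatively routine once Proposition \ref{1} is in hand.
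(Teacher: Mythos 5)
First, something you could not have known: the paper contains no proof of this proposition. It is imported verbatim, with the citations \cite{S} and \cite{BDS}, as a known result (it is S\'os's representation theorem, the continued-fraction analogue of Ostrowski numeration), and the paper only ever uses it. So there is no internal argument to compare yours against; the assessment below is of your plan on its own terms, and your route is indeed the classical one from the cited sources: the conventions $\theta_{-1}=-1$, $\theta_0=\alpha$, the recurrence $\theta_{n+1}=a_{n+1}\theta_n+\theta_{n-1}$, the sign alternation, and the telescoped identities $\sum_{j\ge 0}a_{2j+1}\theta_{2j}=1$ and $\sum_{j\ge 0}a_{2j+2}\theta_{2j+1}=-\alpha$ are all correct, and they do drive the greedy construction and the nested-cylinder scheme.

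There are, however, two concrete gaps. (i) Your endpoint bookkeeping is already off. The string with every odd digit maximal and every even digit $0$ satisfies all three constraints, \emph{including} the final clause (since $d_{2i}=0\neq a_{2i+1}$ for every $i$), and by your own odd-index identity it sums to $-\alpha$; hence admissible series fill $[-\alpha,1-\alpha)$, not ``exactly $(-\alpha,1-\alpha)$'' as you claim. For the same reason the left endpoints of your tail intervals are attained by admissible tails, so those intervals are closed on the left, not open. This does not contradict the proposition (which only concerns $\beta$ in the open interval), but it shows that the attainment analysis you rely on is not yet right, and that analysis is precisely what the uniqueness step rests on. (ii) More substantially, uniqueness is described rather than proved. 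At the least index $n$ of disagreement, the crude bounds give $|(d_n-d'_n)\theta_n|\ge|\theta_n|$ against a tail-difference bound of $|\theta_n|+|\theta_{n+1}|$, which is no contradiction; the easy first step is to force $|d_n-d'_n|=1$ from $2|\theta_n|>|\theta_n|+|\theta_{n+1}|$, but what remains is to exclude the equality case, where the tail difference equals $|\theta_n|$ exactly. That exclusion requires an induction along all subsequent digits in which the coupling rule $d_k=a_{k+1}\Rightarrow d_{k-1}=0$ is invoked at every step and in which the eventually-even-maximal strings (the analogue of trailing $9$'s) are finally eliminated by the clause that $d_{2i}\neq a_{2i+1}$ for infinitely many $i$. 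That induction is the entire content of the proposition, and your proposal defers it as ``a short case check.'' In sum: right approach, correct key identities, but what you have is a well-organized plan for the standard proof, not yet a proof.
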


 It is not hard to see that the above characterization is valid for elements of $\T$ also when the concerned $\alpha$ comes from [0,1] which we will repeatedly use in the next sections. Further we will denote by $supp(\beta)$ the set $\{k: d_k\neq 0\}$.\\

\section{Proofs of Theorem B and C}

We start with the following important observation which will come to our help later to establish our main results.\\

\begin{lemma}\label{p2result1}
Let $\alpha\in(0,1)$ be an irrational number. Let
$$
\mathbb{I}:=\{\bigcup\limits_{i\in\N}[s_i,t_i]:i\in\N,s_i<t_i,s_{i+1}=t_i+2,\lim\limits_{i\to\infty}|t_i-s_i|=\infty\}.
$$
  Then there is a $\beta\in \T$ with the following property:

$(\star)$ For $\varepsilon>0$ there exists $B\subset\N$ with $d(B)=0$ such that whenever $n\notin B$ and $1\leq r\leq a_{n+1}$, we have $\|rq_n\beta\|<\varepsilon$.
\end{lemma}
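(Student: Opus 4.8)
The plan is to exhibit $\beta$ explicitly from a member of $\mathbb{I}$ and then reduce $(\star)$ to a single clean statistical estimate on $\|q_n\beta\|$. Fix any $A=\bigcup_i[s_i,t_i]\in\mathbb{I}$ and let $S:=\{t_i+1:i\in\N\}$ be the set of single-index gaps of $A$; note $t_i+1=s_{i+1}-1$ and that consecutive elements of $S$ are separated by $t_{i+1}-t_i=(t_{i+1}-s_{i+1})+2\to\infty$, so $S$ is sparse with gaps tending to infinity. I would define $\beta:=\sum_{k\in S}\theta_k$, i.e.\ take $d_k=1$ for $k\in S$ and $d_k=0$ otherwise. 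Because $S$ is sparse one checks immediately that these coefficients meet the admissibility conditions of Proposition \ref{2}, so $\beta$ is a genuine element of $\T$. The point of this choice is that the ``good'' indices $n$ (those far from $\supp(\beta)=S$) will then fill up the long blocks $[s_i,t_i]$, hence form a density-one set.

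Since $\|rx\|\le r\|x\|$ in $\T$, for $1\le r\le a_{n+1}$ we have $\|rq_n\beta\|\le a_{n+1}\|q_n\beta\|$, so it suffices to make $a_{n+1}\|q_n\beta\|<\varepsilon$ off a density-zero set. The key step is to evaluate $q_n\beta$ modulo $1$. Using the integer identity $q_n\theta_k-q_k\theta_n=q_kp_n-q_np_k\in\Z$ (equivalently Proposition \ref{1}(iii)) for $k\le n$, and keeping the genuinely small terms for $k>n$, I obtain
$$
q_n\beta\equiv\theta_n D_n+R_n\pmod 1,\qquad D_n:=\sum_{k\in S,\,k\le n}q_k,\qquad R_n:=\sum_{k\in S,\,k>n}q_n\theta_k .
$$

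Next I localize the good indices: call $n$ good if it lies in some $[s_i+\ell,\,t_i-\ell]$, where $\ell=\ell(\varepsilon)$ is a margin to be fixed. For such $n$ the nearest elements of $S$ below and above (namely $s_i-1$ and $t_i+1$) are at distance $\ge\ell+1$. Two geometric estimates from Proposition \ref{1}(ii) then control both pieces: the dominant term of $D_n$ is $q_{s_i-1}<\lambda^{\ell+1}q_n$, whence $D_n<\frac{\lambda^{\ell+1}}{1-\lambda^{2}}q_n$; and $|R_n|\le q_n\sum_{k\ge t_i+1}|\theta_k|<\frac{\lambda^{\ell+1}}{1-\lambda}\,q_n|\theta_n|$. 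Multiplying by $a_{n+1}$ and using the crucial boundedness $a_{n+1}\eta_n=a_{n+1}q_n|\theta_n|<1$ from Proposition \ref{1}(i), both $a_{n+1}\|\theta_nD_n\|$ and $a_{n+1}|R_n|$ fall below $\varepsilon/2$ once $\ell$ is chosen so large that $\frac{\lambda^{\ell+1}}{1-\lambda^{2}}$ and $\frac{\lambda^{\ell+1}}{1-\lambda}$ are $<\varepsilon/2$; this prefactor also keeps $\theta_nD_n$ inside $(-\tfrac12,\tfrac12)$, so $\|\theta_nD_n\|=|\theta_n|D_n$. Hence $a_{n+1}\|q_n\beta\|<\varepsilon$ for every good $n$. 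Finally I put $B:=\N\setminus\bigcup_i[s_i+\ell,t_i-\ell]$; since inside $[1,t_i]$ the set $B$ has at most $(2\ell+1)i$ points while $t_i\ge\sum_{j\le i}(t_j-s_j)$ grows superlinearly in $i$ (because $t_j-s_j\to\infty$), we get $d(B)=0$, which is exactly $(\star)$.

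The delicate point is not any single estimate but the uniformity hidden in $(\star)$: the bound must hold simultaneously for all multiples $r\le a_{n+1}$, and $a_{n+1}$ is unbounded. The whole argument hinges on absorbing this factor, i.e.\ on the boundedness $a_{n+1}\eta_n<1$ supplied by Proposition \ref{1}(i), together with choosing the margin $\ell$ independently of the interval index $i$ so that the exceptional set stays of density zero as $\varepsilon\to0$. Making $\supp(\beta)$ sparse while its gaps (the blocks of $A$) grow to infinity is precisely what lets the good indices sit far from $S$ and yet form a density-one set — this is the role played by $\mathbb{I}$.
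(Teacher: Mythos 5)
Your proof is correct and takes essentially the same route as the paper's: the same $\beta$ supported on the gap indices $\{t_i+1:i\in\N\}$ with coefficients $1$, the same head/tail splitting of $q_n\beta$ at the index $n$, Proposition \ref{1}(iii) converting the head into $\theta_n\sum q_k$ modulo $1$, the geometric decay of Proposition \ref{1}(ii) controlling both pieces, the factor $a_{n+1}$ absorbed through $a_{n+1}q_n|\theta_n|<1$ from Proposition \ref{1}(i), and the same density-zero exceptional set of indices within a fixed margin of the block boundaries. The only differences are organizational --- you factor out $r$ at the start via $\|rq_n\beta\|\le a_{n+1}\|q_n\beta\|$ and collect the head sum as $\theta_nD_n$, whereas the paper carries $r$ through each estimate --- so the substance is identical.
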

\begin{proof}
Let us take
$$
\beta=\sum\limits_{k=1}^{\infty}d_k\theta_k,
$$
 where $(supp(\beta))^c=\{k:d_k=0\}=\bigcup\limits_{i=1}^{\infty}[s_i,t_i]$ and for any $k$, $d_k\neq 0\Rightarrow d_k=1$. We will show that $\beta$ has the desired property $(\star)$. For this, let $\lambda\in(0,1)$ be as in Proposition \ref{1}(ii). Let $\varepsilon>0$ be given. Choose $m\in\N$ such that $\frac{\lambda^m}{1-\lambda}<\frac{\varepsilon}{2}$. As $\lim\limits_{i\rightarrow\infty}|t_i-s_i|=\infty$, there is $i_0\in\N$ such that $|t_i-s_i|>2m$ for all $i\geq i_0$. Write $B_1=\{t_i+1:i\in\N\}$. As $\lim\limits_{i\to\infty}|t_{i+1}-t_i|=\infty$, so it readily follows that $d(B_1)=0$. Let $B_2=[1,s_{i_0}-1]$. Obviously $d(B_2)=0$. Now for each $i\in\N$, let $A_i=[s_{i+i_0},s_{i+i_0}+m-1]\cup [t_{i+i_0}-m+1,t_{i+i_0}]$ and write $B_3=\bigcup\limits_{i=1}^{\infty}A_i$. Observe that $d(B_3)=\lim\limits_{i\to\infty}\frac{2mi}{\sum\limits_{j=1}^{i}|t_j-s_j|}=0$. Let $B=B_1\cup B_2\cup B_3$. It is clear that $d(B)=0$.

  Let $n\in B^c$ and choose $r$, $1\leq r\leq a_{n+1}$. Then $rq_n\beta=rq_n\sum\limits_{i=1}^{\infty}\theta_{t_i+1}=
 rq_n\sum\limits_{\stackrel{i\in\N}{t_i+1<n}}\theta_{t_i+1}+rq_n\sum\limits_{\stackrel{i\in\N}{t_i+1\geq n}}\theta_{t_i+1}$, where the absolute value of the second summation
 $$
 |rq_n\sum\limits_{\stackrel{i\in\N}{t_i+1\geq n}}\theta_{t_i+1}|\leq rq_n\sum\limits_{\stackrel{i\in\N}{t_i+1\geq n}}|\theta_{t_i+1}|<rq_n\lambda^m|\theta_n|+rq_n\lambda^{3m}|\theta_n|+...
 $$
 $$
 =rq_n|\theta_n|(\lambda^m+\lambda^{3m}+\lambda^{5m}+...)=rq_n|\theta_n|\frac{\lambda^m}{1-\lambda^{2m}}<rq_n|\theta_n|\frac{\lambda^m}{1-\lambda}
 <\frac{r}{a_{n+1}}\frac{\varepsilon}{2}\leq \frac{\varepsilon}{2}.
 $$
 So $\|rq_n\sum\limits_{\stackrel{i\in\N}{t_i+1\geq n}}\theta_{t_i+1}\|<\frac{\varepsilon}{2}$.
Now if we write $\{t_i+1:i\in\N\}\cap [1,n-1]=\{j_1<j_2<...<j_N\} ~$ (say), then using Proposition \ref{1}(iii) we observe that the first summation
$$
rq_n\sum\limits_{\stackrel{i\in\N}{t_i+1<n}}\theta_{t_i+1}=rq_n\sum\limits_{i=1}^{N}\theta_{j_i}
=r[(l_1+q_{j_1}\theta_n)+(l_2+q_{j_2}\theta_n)+...+(l_N+q_{j_N}\theta_n)]=r.\sum\limits_{i=1}^{N}l_i+\sum\limits_{i=1}^{N}rq_{j_i}\theta_n
$$
for suitable choice of integers $l_i$, $i=1,2,...,N$ with
$$
|\sum\limits_{i=1}^{N}rq_{j_i}\theta_n|\leq \sum\limits_{i=1}^{N}|rq_{j_i}\theta_n|<\lambda^m|rq_n\theta_n|+\lambda^{m+2}|rq_n\theta_n|+...+\lambda^{m+2N}|rq_n\theta_n|
$$
$$
=|rq_n\theta_n|(\lambda^m+\lambda^{m+2}+...+\lambda^{m+2N})
<|rq_n\theta_n|(\lambda^m+\lambda^{m+2}+...\infty)\leq \frac{r}{a_{n+1}}\frac{\lambda^m}{1-\lambda^2}<\frac{\lambda^m}{1-\lambda}<\frac{\varepsilon}{2}.
$$
Hence $\|rq_n\sum\limits_{\stackrel{i\in\N}{t_i+1<n}}\theta_{t_i+1}\|<\frac{\varepsilon}{2}$, which implies that
$$
\|rq_n\beta\|=\|rq_n\sum\limits_{i=1}^{\infty}\theta_{t_i+1}\|\leq
 \|rq_n\sum\limits_{\stackrel{i\in\N}{t_i+1<n}}\theta_{t_i+1}\|+\|rq_n\sum\limits_{\stackrel{i\in\N}{t_i+1\geq n}}\theta_{t_i+1}\|<\varepsilon.
 $$
This proves the result. Further note that $\beta$ is unique for a given choice of a member in the class $\mathbb{I}$. This follows from the construction of $\beta$ where $supp(\beta)$ is determined by the member of $\mathbb{I}$ and in view of the unique representation of $\beta$ (see Proposition \ref{2}).
\end{proof}
\begin{lemma}\label{p2result2}
The class $\I$ defined in Lemma \ref{p2result1} is uncountable.
\end{lemma}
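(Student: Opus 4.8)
The plan is to produce an explicit injection from $\{0,1\}^{\N}$ into $\mathbb{I}$, which at once gives $|\mathbb{I}|\geq 2^{\aleph_0}=\mathfrak{c}$ and hence uncountability; the reverse bound $|\mathbb{I}|\leq\mathfrak{c}$ is automatic since $\mathbb{I}\subseteq\mathcal{P}(\N)$, so in fact one obtains $|\mathbb{I}|=\mathfrak{c}$. The first step is to notice that a member of $\mathbb{I}$ is completely determined by two pieces of data: the left endpoint $s_1$ of the first interval and the sequence of lengths $\ell_i:=t_i-s_i$. Indeed, the recursion $t_i=s_i+\ell_i$ together with $s_{i+1}=t_i+2$ reconstructs every endpoint from $s_1$ and $(\ell_i)$, and the defining constraints of $\mathbb{I}$ translate into nothing more than $\ell_i\geq 1$ for all $i$ together with $\ell_i\to\infty$.

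The second, and crucial, step is to check that this data can be read back off from the set $A=\bigcup_i[s_i,t_i]$. Since consecutive intervals are separated by exactly one missing integer, namely the point $t_i+1$ lying strictly between $t_i$ and $s_{i+1}=t_i+2$, the intervals $[s_i,t_i]$ are precisely the maximal blocks of consecutive integers contained in $A$. Hence the interval decomposition, and therefore both $s_1$ and the entire length sequence $(\ell_i)$, are uniquely recoverable from $A$ alone. Consequently two distinct admissible data sets always produce two distinct elements of $\mathbb{I}$.

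Finally I would exploit this rigidity to encode an arbitrary binary string. Fix $s_1=1$ and, for $x=(x_i)\in\{0,1\}^{\N}$, set $\ell_i:=2i+x_i$. Then $\ell_i\geq 2$ and $\ell_i\geq 2i\to\infty$, so the associated set $A_x$ is a genuine member of $\mathbb{I}$. If $x\neq x'$ then $\ell_i\neq\ell_i'$ for some $i$ (because $t\mapsto 2i+t$ is injective for each fixed $i$), so by the previous paragraph $A_x\neq A_{x'}$; thus $x\mapsto A_x$ is injective and the conclusion follows.

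The step demanding the most care is the injectivity argument in the second paragraph, that is, verifying that distinct length sequences cannot accidentally yield the same union. Everything hinges on the single-point gaps $t_i+1$, which make the decomposition of $A$ into maximal runs of consecutive integers canonical; once this is established, the encoding in the last step is entirely routine.
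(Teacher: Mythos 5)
Your proof is correct and takes essentially the same route as the paper: both arguments parametrize members of $\mathbb{I}$ by the sequence of interval lengths $t_i-s_i$ and then exhibit uncountably many admissible length sequences (the paper takes $|t_i-s_i|=\lfloor i^\nu\rfloor$ for real $\nu>1$, you take $2i+x_i$ for $x\in\{0,1\}^{\mathbb{N}}$). Your explicit recovery of the interval decomposition from the union via the single-point gaps makes the injectivity step more careful than the paper's one-line claim, and also pins down $|\mathbb{I}|=\mathfrak{c}$ exactly, but these are refinements of the same idea rather than a different argument.
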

\begin{proof}
Note that for any $\nu>1$ we can choose $\bigcup\limits_{i\in\N}[s_i,t_i]\in\I$ such that $|t_i-s_i|=\lfloor i^\nu\rfloor$ and for different $\nu$ we get distinct members of $\mathbb{I}$.
\end{proof}

\begin{proposition}\label{suffcond1}
Let $\beta\in \T$ be such that $(supp(\beta))^c=\{k\in\N: d_k=0\}\in \mathbb{I}$ and for any $k$, $d_k\neq 0\Rightarrow d_k=1$. Then $\beta\in t^s_{(q_n)}(\T)$.
\end{proposition}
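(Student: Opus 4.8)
The plan is to recognize that the hypotheses placed on $\beta$ are precisely those under which property $(\star)$ of Lemma \ref{p2result1} was established, and then to read off statistical convergence from the $r=1$ instance of $(\star)$. The key point is that $(\star)$ already packages all of the delicate diophantine estimates, so the proposition should follow with essentially no further computation once the identification is made.

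First I would observe that $\beta$ is completely determined by the data in the hypothesis. Indeed, by the uniqueness of the representation in Proposition \ref{2}, specifying $(supp(\beta))^c=\bigcup_i[s_i,t_i]\in\mathbb{I}$ fixes exactly the set of indices $k$ with $d_k=0$, while the requirement $d_k\neq 0\Rightarrow d_k=1$ forces $d_k=1$ at every remaining index. Thus $\beta=\sum_{k\in supp(\beta)}\theta_k$ is exactly the element constructed in the proof of Lemma \ref{p2result1} for the corresponding member of $\mathbb{I}$, and hence that lemma applies to our $\beta$ and guarantees property $(\star)$.

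Next I would unwind the definition of membership in $t^s_{(q_n)}(\T)$: by Definition \ref{Def1} applied to the sequence $(q_n\beta)$, I must show that for every $\varepsilon>0$ one has $d(\{n\in\N:\|q_n\beta\|\geq\varepsilon\})=0$. Fixing such an $\varepsilon$, property $(\star)$ supplies a set $B\subset\N$ with $d(B)=0$ such that $\|rq_n\beta\|<\varepsilon$ whenever $n\notin B$ and $1\leq r\leq a_{n+1}$. Specializing to $r=1$, which is always permissible since $a_{n+1}\geq 1$, gives $\|q_n\beta\|<\varepsilon$ for every $n\notin B$, so that $\{n:\|q_n\beta\|\geq\varepsilon\}\subseteq B$ has density zero. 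As $\varepsilon>0$ was arbitrary, this shows $q_n\beta\to 0$ statistically, i.e. $\beta\in t^s_{(q_n)}(\T)$. The only genuine step is the identification of $\beta$ with the element of Lemma \ref{p2result1} via the uniqueness of the $\theta$-representation; once that is in place the conclusion is immediate from the $r=1$ case, so I do not anticipate any real obstacle here.
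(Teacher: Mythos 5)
Your proposal is correct and follows exactly the paper's own route: the paper proves this proposition by invoking Lemma \ref{p2result1} with $r=1$ for all $n$, which is precisely your argument. The extra details you supply (identifying $\beta$ with the element of the lemma via the uniqueness of the $\theta$-representation from Proposition \ref{2}, and unwinding the definition of statistical convergence) are implicit in the paper's one-line proof, so there is no substantive difference.
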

\begin{proof}
The conclusion readily follows from Lemma \ref{p2result1} taking $r=1$ for all $n$.
\end{proof}

\noindent\textbf{Proof of Theorem B}
\begin{proof}
Take $r=1$ for all $n$ in Lemma \ref{p2result1} and the result easily follows from Lemma \ref{p2result1} and Lemma \ref{p2result2}.
\end{proof}

\begin{remark}\label{r1}
 Let $\alpha=[0;a_1,a_2,...]$ be an irrational number with the sequence $(a_n)$ being bounded. It was established in  \cite{L} that $t_{(q_n)}(\T)=\langle\alpha\rangle$, the cyclic subgroup generated by $\alpha$. Clearly  $t^s_{(q_n)}(\T)$ is an uncountable subgroup containing $\langle\alpha\rangle$. Moreover, as $\langle\alpha\rangle$ is countable, we immediately get that $t^s_{(q_n)}(\T)\neq t_{(q_n)}(\T)$.
 \end{remark}

\begin{corollary}\label{p2result4}
Let $(f_n)$ be the celebrated Fibonacci sequence, i.e. $f_1=1, f_2=1$ and for all $n\geq 3$, $f_n = f_{n-1} + f_{n-2}$. Then the associated s-characterized subgroup $t^s_{(f_n)}(\T)$ is uncountable. This answers the open question \textbf{Question 6.4} from \cite{DDB}.
\end{corollary}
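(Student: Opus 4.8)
The plan is to recognize the Fibonacci sequence as the sequence of convergent denominators of one specific irrational number, and then to read off the conclusion directly from Theorem B. First I would take the golden ratio $\alpha=\frac{\sqrt5-1}{2}\in(0,1)$, whose regular continued fraction expansion is $\alpha=[0;1,1,1,\dots]$, i.e. $a_0=0$ and $a_n=1$ for every $n\geq1$. This $\alpha$ is irrational and lies in $(0,1)$, so it satisfies the hypotheses of every result in this section, in particular those of Lemma \ref{p2result1}, Lemma \ref{p2result2} and Theorem B.

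Next I would compute the denominators $(q_n)$ of the convergents of this particular $\alpha$. Using the standard recurrence $q_n=a_nq_{n-1}+q_{n-2}$ with $q_{-1}=0$ and $q_0=1$, the choice $a_n=1$ for all $n\geq1$ forces $q_n=q_{n-1}+q_{n-2}$, which is exactly the defining recurrence of the Fibonacci sequence. Comparing initial terms ($q_0=1=f_1$, $q_1=1=f_2$, $q_2=2=f_3$, and so on) gives $q_n=f_{n+1}$ for all $n\geq0$, so that, regarded as an ordered sequence of integers, $(q_n)$ is precisely the Fibonacci sequence $(f_n)$.

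The key observation is then that $t^s_{(a_n)}(\T)$ depends only on the underlying sequence of integers and is unaffected by a finite reindexing, since the statistical convergence of $(f_nx)$ to $0$ is insensitive to a shift of the index (densities are not altered by discarding finitely many terms). Consequently $t^s_{(f_n)}(\T)=t^s_{(q_n)}(\T)$ for the golden ratio $\alpha=\frac{\sqrt5-1}{2}$. Applying Theorem B to this $\alpha$ now yields at once that $t^s_{(f_n)}(\T)$ is uncountable, which is the desired answer to Question 6.4 of \cite{DDB}. (It is worth noting that here $(a_n)$ is bounded, so by Larcher's result $t_{(q_n)}(\T)=\langle\alpha\rangle$ is countable, exhibiting the genuine gap between the two notions in the spirit of Remark \ref{r1}.)

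I do not expect any real analytic obstacle: all the substantive work is already carried out in Lemma \ref{p2result1} and Lemma \ref{p2result2}, hence in Theorem B. The only point that needs genuine care is the bookkeeping in the identification $(q_n)=(f_n)$, namely checking the initial indices so that the two sequences literally coincide rather than merely obeying the same recurrence with a possibly shifted offset. Once this verification is in place, the corollary is an immediate specialization of Theorem B.
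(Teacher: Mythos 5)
Your proposal is correct and follows essentially the same route as the paper: take $\alpha=[0;1,1,1,\dots]$ (the golden ratio conjugate), identify the convergent denominators $(q_n)$ with the Fibonacci sequence, and invoke Theorem B. The only difference is that you spell out the index-shift bookkeeping ($q_n=f_{n+1}$ and the insensitivity of statistical convergence to reindexing), which the paper compresses into the single assertion that the two sequences coincide.
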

\begin{proof}
Let $\alpha=[0;1,1,1,...]$. Then the associated sequence $(q_n)$ of denominators of the convergents of the continued fraction approximation of $\alpha$ coincides with $(f_n)$. The result now follows from Theorem B.
\end{proof}

The situation when $\alpha=[0;a_1,a_2,...]$ is an irrational number with the sequence $(a_n)$ being unbounded seems to be much more complex as in that case the characterized subgroup $t_{(q_n)}(\T)$ itself happens to be uncountable \cite{KL}. However the following result shows that in that case also, the corresponding statistically characterized subgroup is strictly larger than the characterized subgroup.

\begin{theorem}\label{p2result5}
For any irrational number $\alpha=[0;a_1,a_2,...]$ with unbounded $(a_n)$, $t^s_{(q_n)}(\T) \varsupsetneq t_{(q_n)}(\T)$.
\end{theorem}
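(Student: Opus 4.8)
The plan is to exhibit a single element $\beta\in t^s_{(q_n)}(\T)\setminus t_{(q_n)}(\T)$; since the reverse inclusion $t^s_{(q_n)}(\T)\supseteq t_{(q_n)}(\T)$ always holds, this already gives the strict containment. As $(a_n)$ is unbounded, the set $S=\{n\in\N:a_{n+1}\geq 4\}$ is infinite, so I can fix an increasing sequence $(n_i)\subseteq S$ that is very sparse, say with $n_{i+1}-n_i\geq i$ (so that $d(\{n_i:i\in\N\})=0$ and the gaps tend to $\infty$). I then look for $\beta=\sum_i d_{n_i}\theta_{n_i}$ with $d_k=0$ for $k\notin\{n_i\}$, choosing the digits $d_{n_i}$ so that the diagonal contribution $\|d_{n_i}q_{n_i}\theta_{n_i}\|=\|d_{n_i}\eta_{n_i}\|$ stays bounded away from $0$. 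This will force $q_{n_j}\beta\not\to0$ and hence $\beta\notin t_{(q_n)}(\T)$, while the sparseness of the support will be responsible for statistical convergence.

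For the choice of digits I would use Proposition \ref{1}(i), which gives $\frac{1}{a_{n+1}+2}<\eta_n<\frac{1}{a_{n+1}}$. Thus for $n=n_i$ the multiples $d\eta_{n_i}$, $1\le d\le a_{n_i+1}$, form a grid of mesh $<1/4$ whose largest point exceeds $2/3$; picking $d_{n_i}$ to be the first multiple exceeding $1/2$ yields $d_{n_i}\le a_{n_i+1}$ and $\|d_{n_i}\eta_{n_i}\|>\tfrac14$. One also checks that the resulting digit string is admissible in the sense of Proposition \ref{2}, which is automatic since all but a sparse set of digits vanish and $d_{n_i}<a_{n_i+1}$.

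The heart of the argument is estimating, for arbitrary $n$, the cross contribution of the $i$-th term to $q_n\beta$. Using Proposition \ref{1}(iii) to replace $q_n\theta_{n_i}$ by $q_{n_i}\theta_n$ when $n_i<n$, together with the bound $|\theta_m|<\frac{1}{a_{m+1}q_m}$ from Proposition \ref{1}(i) and $q_{k-j}<\lambda^jq_k$ from Proposition \ref{1}(ii), the factor $a_{n_i+1}$ coming from $d_{n_i}\le a_{n_i+1}$ cancels against the $a_{n_i+1}$ in the denominator of $|\theta_{n_i}|$. This leaves the clean, partial-quotient-free estimate $\|d_{n_i}q_n\theta_{n_i}\|<\lambda^{|n-n_i|-1}$ for $|n-n_i|\ge2$. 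Consequently, for every $n$ outside the $\pm1$ neighbourhoods of the $n_i$ one has $\|q_n\beta\|\lesssim\lambda^{s}$, where $s$ is the distance from $n$ to the nearest $n_i$. Hence for each fixed $\eps>0$ the set $\{n:\|q_n\beta\|\geq\eps\}$ is contained in a union of windows of a single fixed width $\asymp\log(1/\eps)$ centred at the points $n_i$; since $(n_i)$ has density zero, this union has density zero, giving $\beta\in t^s_{(q_n)}(\T)$. The same estimate applied at $n=n_j$ bounds the total cross term by $\sum_{i\ne j}\lambda^{|n_i-n_j|}\to0$, so $\|q_{n_j}\beta\|>\tfrac18$ for all large $j$, confirming $\beta\notin t_{(q_n)}(\T)$.

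The step I expect to be the main obstacle is precisely this cross-term estimate. The naive bound $\|d_{n_i}q_n\theta_{n_i}\|\le a_{n_i+1}\lambda^{|n-n_i|}$ is useless when the partial quotients grow faster than geometrically (for instance $a_{m}\sim 2^{2^m}$), because then no fixed gap can absorb the factor $a_{n_i+1}$ and the window widths would blow up. The key is to notice the cancellation that makes the bound independent of the $a_m$; once this is in place, mere sparseness of the support simultaneously controls the statistical convergence and keeps the lower bound $\|q_{n_j}\beta\|$ away from $0$ at the spikes, which is exactly what distinguishes $t^s_{(q_n)}(\T)$ from $t_{(q_n)}(\T)$ in the unbounded case.
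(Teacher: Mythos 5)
Your proof is correct and takes essentially the same route as the paper's: a $\beta$ supported on a sparse set of indices sitting just before large partial quotients, with digits chosen so that the diagonal term $\|d_{n_i}q_{n_i}\theta_{n_i}\|$ is pinned in a fixed interval away from $0$, the cross terms killed by exactly the cancellation $d_{n_i}|\theta_{n_i}|\le 1/q_{n_i}$ you single out (via Proposition \ref{1}(i)--(iii)), and the exceptional set covered by density-zero windows around the support. The only cosmetic difference is the digit choice --- the paper takes $d=\lfloor a_{t_i+2}/2\rfloor$ along a subsequence with $a_{n_i}>2$, $a_{n_i}\to\infty$, whereas you take the first multiple of $\eta_{n_i}$ exceeding $\tfrac12$ and need only $a_{n_i+1}\ge 4$ --- but the resulting estimates are the same.
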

\begin{proof}
We will construct an element $\beta\in t^s_{(q_n)}(\T)$ which is outside $t_{(q_n)}(\T)$. Since $(a_n)$ is unbounded, we can choose a  subsequence $(a_{n_i})$ of $(a_n)$ such that $a_{n_i}>2$ for all $i$, $\lim\limits_{i\to\infty}a_{n_i}=\infty$ and also $\lim\limits_{i\to\infty}|n_{i+1}-n_i|=\infty$.

 Let us choose for any $i\in\N$, $s_i,t_i\in\N$ such that $s_1=1$, $t_1=n_2-2$; and in general $s_i=n_i$, $t_i=n_{i+1}-2$ for $i\geq 2$. Note that $\lim\limits_{i\to\infty}|t_i-s_i|=\lim\limits_{i\to\infty}|(n_{i+1}-n_i)-2|=\infty$. Now take $\beta=\sum\limits_{k=1}^{\infty}d_k\theta_k$ where  $(supp(\beta))^c=\{k:d_k=0\}=\bigcup\limits_{i=1}^{\infty}[s_i,t_i]$ and $d_{t_i+1}=\lfloor \frac{a_{t_i+2}}{2}\rfloor$ for all $i$. Writing in a clearer way, we have
 $$
 \beta=\sum\limits_{i=1}^{\infty}\lfloor\frac{a_{t_i+2}}{2}\rfloor\theta_{t_i+1}.
 $$

First we will show that $\beta\in t^s_{(q_n)}(\T)$. Take any $\varepsilon>0$, arbitrarily small. Then proceeding as in the proof of Lemma \ref{p2result1}, we can choose $\lambda\in(0,1)$, $m, i_0\in\N$, $B_k\subset\N$ for $k=1,2,3$ such that\\
(i) $\frac{\lambda^m}{1-\lambda}<\frac{\varepsilon}{2}$,\\
(ii) $|t_i-s_i|>2m$ for all $i\geq i_0$,\\
(iii) $d(B)=0$, where $B=B_1\cup B_2\cup B_3$ and
$$B_1=\{t_i+1:i\in\N\}, B_2=[1,s_{i_0}-1] \mbox{~~and~~} B_3=\bigcup\limits_{i=1}^{\infty}([s_{i+i_0},s_{i+i_0}+m-1]\cup [t_{i+i_0}-m+1,t_{i+i_0}]).
$$
Observe that for $n\in B^c$,
$$
q_n\beta=q_n\sum\limits_{i=1}^{\infty}\lfloor\frac{a_{t_i+2}}{2}\rfloor\theta_{t_i+1}
=q_n\sum\limits_{\stackrel{i\in\N}{t_i+1<n}}\lfloor\frac{a_{t_i+2}}{2}\rfloor\theta_{t_i+1}+q_n\sum\limits_{\stackrel{i\in\N}{t_i+1\geq n}}\lfloor\frac{a_{t_i+2}}{2}\rfloor\theta_{t_i+1}.
$$
Let $\{t_i+1:i\in\N\}\cap [n,\infty)=\{j_1<j_2<...<j_i<...\}$. Then the absolute value of the second summation
$$
|q_n\sum\limits_{\stackrel{i\in\N}{t_i+1\geq n}}\lfloor\frac{a_{t_i+2}}{2}\rfloor\theta_{t_i+1}|
=|q_n\sum\limits_{i=1}^{\infty}\lfloor\frac{a_{j_i+1}}{2}\rfloor\theta_{j_i}|
\leq q_n\sum\limits_{i=1}^{\infty}|\lfloor\frac{a_{j_i+1}}{2}\rfloor\theta_{j_i}|
\leq \frac{1}{2}\sum\limits_{i=1}^{\infty}a_{j_i+1}|q_n\theta_{j_i}|
$$
$$
<\frac{1}{2}\sum\limits_{i=1}^{\infty}a_{j_i+1}|\lambda^{(2i-1)m}q_{j_i}\theta_{j_i}|
<\frac{1}{2}\sum\limits_{i=1}^{\infty}\lambda^{(2i-1)m}a_{j_i+1}\frac{1}{a_{j_i+1}}
<\frac{1}{2}\frac{\lambda^m}{1-\lambda^{2m}}<\frac{1}{2}\frac{\lambda^m}{1-\lambda}<\frac{\varepsilon}{2}.
$$

Again writing $\{t_i+1:i\in\N\}\cap [1,n-1]=\{{j^\prime}_1<{j^\prime}_2<...<{j^\prime}_N\}$, and using Proposition \ref{1}(iii) we can observe that the first summation
$$
q_n\sum\limits_{\stackrel{i\in\N}{t_i+1< n}}\lfloor\frac{a_{t_i+2}}{2}\rfloor\theta_{t_i+1}
=\sum\limits_{i=1}^{N}\lfloor\frac{a_{j_i+1}}{2}\rfloor q_n\theta_{j_i}
=\sum\limits_{i=1}^{N}\lfloor\frac{a_{j_i+1}}{2}\rfloor (l_i+q_{j_i}\theta_n)
=\sum\limits_{i=1}^{N}\lfloor\frac{a_{j_i+1}}{2}\rfloor l_i+\sum\limits_{i=1}^{N}\lfloor\frac{a_{j_i+1}}{2}\rfloor q_{j_i}\theta_n 
$$
for suitable choice of integers $l_i$, $i=1,2,...,N$. Note that $\sum\limits_{i=1}^{N}\lfloor\frac{a_{j_i+1}}{2}\rfloor l_i$ is also an integer and
$$
|\sum\limits_{i=1}^{N}\lfloor\frac{a_{j_i+1}}{2}\rfloor q_{j_i}\theta_n|
\leq \frac{1}{2}\sum\limits_{i=1}^{N}a_{j_i+1}|q_{j_i}\theta_n|
< \frac{1}{2}\sum\limits_{i=1}^{N}a_{j_i+1}\lambda^{(2i-1)+m}|q_{j_i}\theta_{j_i}|
< \frac{1}{2}\sum\limits_{i=1}^{N}\lambda^{(2i-1)+m}a_{j_i+1}\frac{1}{{a_{j_i+1}}}
$$
$$
< \frac{1}{2}\sum\limits_{i=1}^{N}\lambda^{(2i-1)+m}<\frac{1}{2}\sum\limits_{i=1}^{\infty}\lambda^{(2i-1)+m}
<\frac{1}{2}\frac{\lambda^m}{1-\lambda^2}<\frac{1}{2}\frac{\lambda^m}{1-\lambda}<\frac{\varepsilon}{2}.
$$
So we can conclude that
$$
\|q_n\beta\|=\|q_n\sum\limits_{i=1}^{\infty}\lfloor\frac{a_{t_i+2}}{2}\rfloor\theta_{t_i+1}\|
\leq \|q_n\sum\limits_{\stackrel{i\in\N}{t_i+1<n}}\lfloor\frac{a_{t_i+2}}{2}\rfloor\theta_{t_i+1}\|+
\|q_n\sum\limits_{\stackrel{i\in\N}{t_i+1\geq n}}\lfloor\frac{a_{t_i+2}}{2}\rfloor\theta_{t_i+1}\|<\varepsilon.
$$

 Next we show that $\beta\notin t_{(q_n)}(\T)$. For this it is sufficient to show that for infinitely many $n$,  $\|q_n\beta\|\in[a,b]$ for some $a, b, ~\mbox{where}~0<a<b<1$. For $j\in\N$, we have
$$
q_{t_j+1}\beta=q_{t_j+1}\sum\limits_{i=1}^{\infty}\lfloor\frac{a_{t_i+2}}{2}\rfloor\theta_{t_i+1}
$$
$$
=q_{t_j+1}\lfloor\frac{a_{t_j+2}}{2}\rfloor\theta_{t_j+1}+q_{t_j+1}\sum\limits_{i<j}\lfloor\frac{a_{t_i+2}}{2}\rfloor\theta_{t_i+1}
+q_{t_j+1}\sum\limits_{i>j}\lfloor\frac{a_{t_i+2}}{2}\rfloor\theta_{t_i+1}.
$$
As $\lim\limits_{i\to\infty}|t_i-s_i|=\infty$, so we can find $~i_0\in\N$ such that for all $i,j\geq i_0$, $i\neq j$ we have $|t_j-t_i|>2m$. Consequently using similar technique used in the first part of the proof, we can conclude  that for $j>i_0$
$$
|q_{t_j+1}\sum\limits_{i<j}\lfloor\frac{a_{t_i+2}}{2}\rfloor\theta_{t_i+1}
+q_{t_j+1}\sum\limits_{i>j}\lfloor\frac{a_{t_i+2}}{2}\rfloor\theta_{t_i+1}|<\varepsilon,
$$
and
$$
|q_{t_j+1}\lfloor\frac{a_{t_j+2}}{2}\rfloor\theta_{t_j+1}|
\leq \frac{1}{2}a_{t_j+2}|q_{t_j+1}\theta_{t_j+1}|<\frac{1}{2}a_{t_j+2}\frac{1}{a_{t_j+2}}=\frac{1}{2}.
$$
This shows that for all $j>i_0$ we have $|q_{t_j+1}\beta|<\frac{1}{2}+\varepsilon$. Since this is true for any $\varepsilon >0$, hence  $|q_{t_j+1}\beta|\leq \frac{1}{2}$. Further note that
$$
|q_{t_j+1}\beta|=|q_{t_j+1}\sum\limits_{i=1}^{\infty}\lfloor\frac{a_{t_i+2}}{2}\rfloor\theta_{t_i+1}|
=|q_{t_j+1}\lfloor\frac{a_{t_j+2}}{2}\rfloor\theta_{t_j+1}+q_{t_j+1}\sum\limits_{i\neq j}\lfloor\frac{a_{t_i+2}}{2}\rfloor\theta_{t_i+1}|
$$
$$
\geq |q_{t_j+1}\lfloor\frac{a_{t_j+2}}{2}\rfloor\theta_{t_j+1}|-|q_{t_j+1}\sum\limits_{i\neq j}\lfloor\frac{a_{t_i+2}}{2}\rfloor\theta_{t_i+1}|>|q_{t_j+1}\frac{a_{t_j+2}-1}{2}\theta_{t_j+1}|-\varepsilon
$$
$$
=\frac{a_{t_j+2}-1}{2}|q_{t_j+1}\theta_{t_j+1}|-\varepsilon
>\frac{a_{t_j+2}-1}{2}\frac{1}{a_{t_j+2}+2}-\varepsilon=\frac{1}{2}(1-\frac{3}{a_{t_j+2}+2})-\varepsilon
$$
$$
\geq \frac{1}{2}(1-\frac{3}{4})-\varepsilon =\frac{1}{8}-\varepsilon .
$$
Since this is true for any $\varepsilon >0$, so $|q_{t_j+1}\beta|\geq \frac{1}{8}$.
Hence as desired, we get that for all $j>i_0$, $\|q_{t_j+1}\beta\|\in [\frac{1}{8} ,\frac{1}{2}]$, i.e. $\|q_n\beta\|$ can not converge to $0$.
\end{proof}

\noindent\textbf{Proof of Theorem C}
\begin{proof} The result is obvious for the irrational numbers $\alpha=[0;a_1,a_2,...]$ with bounded $(a_n)$ in view of Remark \ref{r1}. Even when $(a_n)$ is unbounded this can be seen from Theorem \ref{p2result5}. Observe that for every possible subsequence of the subsequence $(a_{n_i})$ of $(a_n)$ used in the proof (which automatically has the desired properties), we can proceed similarly to construct an element $\beta\in t^s_{(q_n)}(\T)\setminus t_{(q_n)}(\T)$. Note that for two distinct subsequences of $(a_{n_i})$, the corresponding constructed elements $\beta$'s of $\T$ are distinct as each subsequence gives a unique element in $\mathbb{I}$ which in turn determines the support of the corresponding element $\beta$. As the cardinality of the set of all possible distinct subsequences of $(a_{n_i})$ is $\mathfrak{c}$, it follows that in this case also $|t^s_{(q_n)}(\T)\setminus t_{(q_n)}(\T)| = \mathfrak{c}$.
\end{proof}
Now a natural question arises, whether it is possible to point out certain characteristics of the elements of $\T$ which ensures its inclusion in the statistically charaterized subgroup corresponding to a given irrational number and our last result of this section provides a positive answer in that direction.\\

\begin{proposition}\label{suff}
Let $\alpha$ be an irrational number and $\beta\in\T$ be such that $d(supp(\beta))=0$. Then $\beta\in t^s_{(q_n)}(\T)$.
\end{proposition}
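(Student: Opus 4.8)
The plan is to verify the definition of statistical convergence directly: I will show that for every $\varepsilon>0$ the set $\{n:\|q_n\beta\|\ge\varepsilon\}$ has density zero, which is exactly $q_n\beta\to0$ statistically, i.e. $\beta\in t^s_{(q_n)}(\T)$. Writing $S:=\mathrm{supp}(\beta)$ and using the representation $\beta=\sum_{k\in S}d_k\theta_k$ from Proposition \ref{2}, the idea is to produce, for each $\varepsilon$, a single density-zero exceptional set $B=B_\varepsilon$ outside of which $\|q_n\beta\|<\varepsilon$, and to build $B$ as a fixed-width neighbourhood of the support.

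First I would fix $\lambda\in(0,1)$ as in Proposition \ref{1}(ii) and pick $m\in\N$ with $\frac{\lambda^m}{1-\lambda}<\frac{\varepsilon}{2}$. Then I set $B=\bigcup_{k\in S}\big([k-m,k+m]\cap\N\big)$, the $m$-neighbourhood of $S$. Since $|B\cap[1,N]|\le(2m+1)\,|S\cap[1,N+m]|$ and $d(S)=0$, one obtains $d(B)=0$; this is the \emph{only} place the hypothesis $d(\mathrm{supp}(\beta))=0$ is used. For $n\notin B$ we have $n\notin S$ and, crucially, every $k\in S$ satisfies $|n-k|>m$, so the sum $\beta=\sum_{k\in S}d_k\theta_k$ splits cleanly into a head $\sum_{k\in S,\,k\le n-m-1}$ and a tail $\sum_{k\in S,\,k\ge n+m+1}$ with no surviving terms at indices near $n$.

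The two estimates then mirror Lemma \ref{p2result1}. From Proposition \ref{1}(i) one extracts the single inequality $a_{k+1}q_k|\theta_k|<1$, equivalently $a_{k+1}|\theta_k|<\frac{1}{q_k}$, and combined with $d_k\le a_{k+1}$ this bounds every coefficient regardless of the size of the partial quotients. For the tail I would use $q_n<\lambda^{k-n}q_k$ (Proposition \ref{1}(ii)) to get $q_n\sum_{k\ge n+m+1}d_k|\theta_k|\le\sum_{k\ge n+m+1}\frac{q_n}{q_k}<\sum_{j\ge m+1}\lambda^{j}<\frac{\varepsilon}{2}$. For the head I would apply Proposition \ref{1}(iii), which gives $q_n\theta_k=l_k+q_k\theta_n$ with $l_k\in\Z$, so that $q_n\sum_{k\le n-m-1}d_k\theta_k\equiv\theta_n\sum_{k\le n-m-1}d_kq_k\pmod{\Z}$, and then $|\theta_n|\sum_{k\le n-m-1}d_kq_k\le\sum_{k\le n-m-1}\lambda^{n-k}<\frac{\varepsilon}{2}$, using $|\theta_n|<\lambda^{n-k}|\theta_k|$. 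Adding the two bounds yields $\|q_n\beta\|<\varepsilon$ for all $n\notin B$, whence $\{n:\|q_n\beta\|\ge\varepsilon\}\subseteq B$ has density zero, as required.

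The main obstacle, and the only genuine difference from the earlier lemmas, is that $S$ is now an arbitrary density-zero set rather than a sparse set of isolated points with gaps tending to infinity, so support indices lying close to $n$ cannot be controlled by the combinatorial structure of $S$. The remedy is to simply absorb every index within distance $m$ of $S$ into the exceptional set; the hard point to get right is the verification that this $m$-fattening is still density-zero, which is exactly the counting bound above. Once $n$ is $m$-separated from $S$, the geometric factors $\lambda^{n-k}$ (head) and $\lambda^{k-n}$ (tail) from Proposition \ref{1}(ii) make both series summable uniformly in how the $d_k$ are distributed, while $a_{k+1}q_k|\theta_k|<1$ absorbs the possibly unbounded $a_{k+1}$; the remaining details (small $n$, where the head is empty, and the harmless integer part $\sum d_kl_k$) are routine.
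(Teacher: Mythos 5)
Your proposal is correct and follows essentially the same route as the paper: your $m$-fattening $B=\bigcup_{k\in \mathrm{supp}(\beta)}[k-m,k+m]$ is exactly the paper's exceptional set $A^*=\bigcup_{i=-m}^{m}(\mathrm{supp}(\beta)+i)$, and your head/tail estimates use the same ingredients ($d_k\le a_{k+1}$, $a_{k+1}q_k|\theta_k|<1$ from Proposition \ref{1}(i), the growth/decay inequalities of Proposition \ref{1}(ii), and the integer-shift identity of Proposition \ref{1}(iii)) to bound each piece by $\frac{\lambda^m}{1-\lambda}<\frac{\varepsilon}{2}$. The only difference is cosmetic: you spell out the counting argument for $d(B)=0$, whereas the paper simply notes that a finite union of translates of a density-zero set has density zero.
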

\begin{proof}
Let
$$
\beta=\sum\limits_{k=1}^{\infty}d_k\theta_k,
$$
where $d(supp(\beta))=d(k: d_k\neq 0\})=0$. Let $\lambda\in (0,1)$ be as in Proposition \ref{2}. For a given  $\varepsilon > 0$ first choose $m\in\N$ such that $\frac{\lambda^m}{1-\lambda}<\frac{\varepsilon}{2}$. Let $A^*=\bigcup\limits_{i=-m}^{m}(supp(\beta)+i)$. Note that $d(A^*)=0$. Further note that  $n\notin A^*\Rightarrow n+i\notin supp(\beta)$ for all $-m\leq i \leq m$. So whenever $n\notin A^*$, we have
$$
q_n\beta=q_n\sum\limits_{k\in supp(\beta)}d_k\theta_k=q_n\sum\limits_{\stackrel{k\in supp(\beta)}{k<n}}d_k\theta_k
+q_n\sum\limits_{\stackrel{k\in supp(\beta)}{k>n}}d_k\theta_k.
$$
Let $supp(\beta)\cap (n,\infty)=\{j_1<j_2<...<j_i<...\}$. Now using Proposition \ref{1}, the absolute value of the second summation is
$$
|q_n\sum\limits_{\stackrel{k\in supp(\beta)}{k>n}}d_k\theta_k|
\leq q_n\sum\limits_{i=1}^{\infty}d_{j_i}|\theta_{j_i}|
\leq \sum\limits_{i=1}^{\infty}d_{j_i}|q_n\theta_{j_i}|
$$
$$
< \sum\limits_{i=1}^{\infty}d_{j_i}|\lambda^{m+i}q_{j_i}\theta_{j_i}|
<\sum\limits_{i=1}^{\infty}d_{j_i}\lambda^{m+i}\frac{1}{a_{j_i+1}}
$$
$$
=\sum\limits_{i=1}^{\infty}\frac{d_{j_i}}{{a_{j_i+1}}}\lambda^{m+i}\leq \frac{\lambda^m}{1-\lambda}<\frac{\varepsilon}{2}.
$$
On the other hand, if $supp(\beta)\cap [1,n)=\{j^\prime_1<j^\prime_2<...<j^\prime_N\}$, then the first summation  becomes
$$
q_n\sum\limits_{\stackrel{k\in supp(\beta)}{k<n}}d_k\theta_k
= q_n\sum\limits_{i=1}^{N}d_{j_i}\theta_{j_i}
=\sum\limits_{i=1}^{N}d_{j_i}q_n\theta_{j_i}
=\sum\limits_{i=1}^{N}d_{j_i}(l_i+q_{j_i}\theta_n)=\sum\limits_{i=1}^{N}d_{j_i}l_i+\sum\limits_{i=1}^{N}d_{j_i}q_{j_i}\theta_n
$$
for some integers $l_i, i=1,2,...,N$. Note that $\sum\limits_{i=1}^{N}d_{j_i}l_i$ is an integer and
$$
|\sum\limits_{i=1}^{N}d_{j_i}q_{j_i}\theta_n|
\leq \sum\limits_{i=1}^{N}d_{j_i}|q_{j_i}\theta_n|
\leq \sum\limits_{i=1}^{N}d_{j_i}\lambda^{i+m}|q_{j_i}\theta_{j_i}|
$$
$$
< \sum\limits_{i=1}^{N}\frac{d_{j_i}}{a_{j_i+1}}\lambda^{i+m}\leq \frac{\lambda^m}{1-\lambda}<\frac{\varepsilon}{2}.
$$
It is now evident that
$$
\|q_n\beta\|\leq \|q_n\sum\limits_{\stackrel{k\in supp(\beta)}{k<n}}d_k\theta_k\|
+|q_n\sum\limits_{\stackrel{k\in supp(\beta)}{k>n}}d_k\theta_k\|<\varepsilon.
$$
\end{proof}

\section{Some observations regarding characterizing sequences}

A sequence $(r_n)$ of positive integers is called "a characterizing sequence" for a real number $\alpha$ \cite{BDS} if for any real number $\beta$, one has
\begin{center}
$ (3.1) ~~~~~~\lim_{n \rightarrow \infty} ||r_n\beta|| = 0~\mbox{if and only if}~\beta \in \langle \alpha\rangle + \mathbb{Z}.$
\end{center}
If $\alpha$ is a rational number of the form $\frac{p}{q}$ where $p$ and $q$ are coprimes, then simply the sequence $(qn)_{n \in \mathbb{Z}}$ is the required characterizing sequence (for details, see the discussions in Section 4 of \cite{BDS}). So the actual interesting question arises when $\alpha$ is an irrational number. The result of Larcher \cite{L} is so significant because for the first time it was established that for an irrational number $\alpha$ with bounded sequence $(a_n)$, where $\alpha = [a_0; a_1, a_2, a_3, \dots]$, the sequence of denominators $(q_n)$ in the sequence of convergents $(\frac{p_n}{q_n})$ happens to be a characterizing sequence of $\langle \alpha\rangle + \mathbb{Z}$. However the problem remained open for irrational numbers $\alpha$ with unbounded sequence $(a_n)$ for long time until B\' \i r\' o, Deshouillers and S\' os \cite{BDS} showed that it is indeed possible to construct a characterizing sequence for such irrational numbers also. They actually established the important fact
that every countable subgroup of $\T$ can be  characterized. Our primary interest in this section is the sequence constructed in the following remarkable result.

\begin{theorem}\cite{BDS}\label{3}
Let $\alpha$ be an irrational number and $(q_n)$ be the associated sequence of denominators of the convergents of continued fraction approximation of $\alpha$. Let $(x_n)$ be the increasing sequence formed by the elements of the set
$$
\{rq_n:1\leq r\leq a_{n+1},n\in\N\}.
$$
Then the characterized subgroup $t_{(x_n)}(\T)$ is countably infinite. More precisely, $t_{(x_n)}(\T)=\langle\alpha\rangle$.
\end{theorem}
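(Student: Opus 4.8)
The plan is to prove both inclusions of $t_{(x_n)}(\T)=\langle\alpha\rangle$ after first recasting membership in a workable form. Since $(x_n)$ is merely the increasing enumeration of the set $S=\{rq_n:n\in\N,\ 1\le r\le a_{n+1}\}$ and $q_n\to\infty$, I would first observe that for any $\beta\in\T$ one has $\beta\in t_{(x_n)}(\T)$ if and only if $\max_{1\le r\le a_{n+1}}\|rq_n\beta\|\to 0$ as $n\to\infty$. Indeed, for fixed $n$ there are only $a_{n+1}$ elements $rq_n$, so $S\cap[1,M]$ is finite for every $M$; hence for any $\varepsilon>0$ the ``bad'' set $\{s\in S:\|s\beta\|\ge\varepsilon\}$ is finite precisely when, for all large $n$, every $rq_n$ with $1\le r\le a_{n+1}$ is ``good'', which is the stated maximum condition. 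This reduction is routine, but it is what lets me work with $q_n$ directly rather than with the opaque enumeration.

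For the inclusion $\langle\alpha\rangle\subseteq t_{(x_n)}(\T)$ I would simply note that $\|rq_n\alpha\|=\|r\theta_n\|\le r|\theta_n|\le a_{n+1}|\theta_n|$, and by Proposition \ref{1}(i) (which gives $q_n|\theta_n|<1/a_{n+1}$, i.e. $a_{n+1}|\theta_n|<1/q_n$) this is $<1/q_n\to 0$ uniformly in $r$. Thus $\max_{1\le r\le a_{n+1}}\|rq_n\alpha\|\to 0$, so $\alpha\in t_{(x_n)}(\T)$, and since $t_{(x_n)}(\T)$ is a subgroup, $\langle\alpha\rangle\subseteq t_{(x_n)}(\T)$.

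The substance is the reverse inclusion, and here the key step, which I expect to be the main obstacle, is to upgrade the maximum condition into the sharp quantitative statement $a_{n+1}\|q_n\beta\|\to 0$. I would prove the elementary ``sweep'' estimate: writing $\delta=\|q_n\beta\|$ and taking the representative of $q_n\beta$ in $(0,1/2]$, the points $q_n\beta,2q_n\beta,\dots,a_{n+1}q_n\beta\pmod 1$ advance in steps of size $\delta$, so their total excursion around the circle is $a_{n+1}\delta$; consequently $\max_{1\le r\le a_{n+1}}\|rq_n\beta\|\ge\min\{1/4,\ a_{n+1}\delta\}$ (if $a_{n+1}\delta<1/2$ the maximum is attained at $r=a_{n+1}$ and equals $a_{n+1}\delta$; otherwise either $\delta\ge 1/4$ already at $r=1$, or the increments are below $1/4$ and the orbit must land within $\delta$ of the antipode $1/2$). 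Feeding $\max_r\|rq_n\beta\|\to 0$ into this inequality forces $a_{n+1}\|q_n\beta\|\to 0$.

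Finally I would convert this into the Kraaikamp--Liardet criterion recalled in the introduction. Writing $\eta_n=q_n|\theta_n|$, Proposition \ref{1}(i) gives $\eta_n>1/(a_{n+1}+2)\ge 1/(3a_{n+1})$, whence $\|q_n\beta\|/\eta_n<3a_{n+1}\|q_n\beta\|\to 0$, i.e. $\|q_n\beta\|\in o(\eta_n)$. By Theorem $1'$ of \cite{KL} this is equivalent to $\beta\in\langle\alpha\rangle+\Z$, that is $\beta\in\langle\alpha\rangle$ in $\T$. This yields $t_{(x_n)}(\T)\subseteq\langle\alpha\rangle$, and combined with the first inclusion gives $t_{(x_n)}(\T)=\langle\alpha\rangle$, which is countably infinite since $\alpha$ is irrational.
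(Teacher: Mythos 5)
Your proof is correct, but bear in mind that the paper contains no proof of this statement to compare against: the theorem is quoted verbatim from \cite{BDS}, so the relevant comparison is with the original argument there, and your route is genuinely different from it. You reduce membership in $t_{(x_n)}(\T)$ to the condition $\max_{1\le r\le a_{n+1}}\|rq_n\beta\|\to 0$ (valid, since only finitely many elements of the set $\{rq_n:1\le r\le a_{n+1},\,n\in\N\}$ lie below any given bound), establish the elementary sweep estimate $\max_{1\le r\le a_{n+1}}\|rq_n\beta\|\ge\min\{1/4,\,a_{n+1}\|q_n\beta\|\}$, and then pass from $a_{n+1}\|q_n\beta\|\to 0$ to $\|q_n\beta\|=o(\eta_n)$ using $\eta_n>1/(a_{n+1}+2)\ge 1/(3a_{n+1})$ from Proposition \ref{1}(i), so that Theorem $1'$ of \cite{KL} yields $\beta\in\langle\alpha\rangle$. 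Every step checks out: the case analysis behind the sweep estimate is sound (if $a_{n+1}\|q_n\beta\|<1/2$ the maximum equals $a_{n+1}\|q_n\beta\|$; otherwise either already $\|q_n\beta\|\ge 1/4$, or the increments are small and some multiple of $q_n\beta$ lands in $[1/4,1/2)$); the forward inclusion is the one-line estimate $\|rq_n\alpha\|\le a_{n+1}|\theta_n|<1/q_n$; and the Kraaikamp--Liardet criterion, as stated in the introduction of this paper, holds for arbitrary irrational $\alpha$, so no boundedness assumption is smuggled in, and there is no circularity since \cite{KL} is independent of (and earlier than) \cite{BDS}. The original proof in \cite{BDS} is instead self-contained, working directly with the Ostrowski representation $\beta=\sum_k d_k\theta_k$ and continued-fraction estimates (essentially the machinery recalled in Propositions \ref{1} and \ref{2}, which is also what the present paper adapts in its own Lemma \ref{p2result1}). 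Your approach buys brevity and a conceptual point: it exhibits the \cite{BDS} characterization of $\langle\alpha\rangle$ as, modulo an elementary lemma, a corollary of the $o(\eta_n)$ criterion of \cite{KL}. What it gives up is self-containedness (the deep direction is entirely outsourced to \cite{KL}) and generality, since the direct method of \cite{BDS} is what extends to characterize arbitrary countable subgroups of $\T$.
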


 Summing up all the relevant results of \cite{L} and \cite{BDS}, we can say that for any irrational number $\alpha \in (0,1)$ with bounded $(a_n)$, we have
 \begin{center}
 $ (3.2)~~~~~~t_{(q_n)}(\T) = t_{(x_n)}(\T)$
 \end{center}
 while for any irrational number $\alpha \in (0,1)$ with unbounded $(a_n)$, we have
 \begin{center}
 $ (3.3)~~~~~~t_{(q_n)}(\T) \varsupsetneq t_{(x_n)}(\T).$
 \end{center}

 When the new notion of statistically characterized subgroups come into picture, the question as to how to approach the subgroup $\langle \alpha\rangle + \mathbb{Z}$ of $\mathbb{R}$ (or the subgroup $\langle \alpha\rangle$ of $\T$, where $\alpha \in (0, 1)$) through some statistically characterizing sequence in the same sense as in (3.1) is trivial, because usual convergence of a sequence always implies statistical convergence of that sequence with same limit.

 What we intend to do in this section is to carry out a similar comparative study of two naturally arising s-characterized subgroups for these characterizing sequences constructed above in \cite{BDS}, one for $(q_n)$ and the other for $(x_n)$ to be precise.

\begin{lemma}\label{new4}
(folklore) Let $(a_n)$ be a sequence of positive real numbers, $(b_n)$ be a sequence of positive numbers and there exists $c>0$ such that $\liminf\limits_{n\to\infty}(a_nb_n)\geq c$. Then $\lim\limits_{n\to\infty}a_n=0\Rightarrow \lim\limits_{n\to\infty}b_n=\infty$.
\end{lemma}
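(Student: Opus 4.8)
The plan is to convert the $\liminf$ hypothesis into a pointwise lower bound for $b_n$ and then exploit that $a_n\to 0$ (with $a_n>0$) forces $1/a_n\to\infty$. First I would invoke the definition of the limit inferior: since $\liminf\limits_{n\to\infty}(a_nb_n)\geq c>0$, fixing any threshold strictly below $c$, say $\tfrac{c}{2}$, there is an $N_0\in\N$ such that $a_nb_n>\tfrac{c}{2}$ for all $n\geq N_0$. As $a_n>0$, this rearranges to the bound $b_n>\tfrac{c}{2a_n}$ valid for every $n\geq N_0$.

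Next I would show that this lower bound itself diverges to infinity. Given an arbitrary $M>0$, the hypothesis $\lim\limits_{n\to\infty}a_n=0$ together with positivity of $a_n$ produces an $N_1\in\N$ with $0<a_n<\tfrac{c}{2M}$ for all $n\geq N_1$. Then for every $n\geq\max\{N_0,N_1\}$ the single chain $b_n>\tfrac{c}{2a_n}>M$ holds, and since $M>0$ was arbitrary this is precisely the assertion $\lim\limits_{n\to\infty}b_n=\infty$.

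There is no genuine obstacle here—the statement is elementary, which is why it is flagged as folklore—so the only point that deserves attention is the order of the quantifiers. One must fix the uniform lower bound $\tfrac{c}{2}$ for $a_nb_n$ at the outset (it holds for all large $n$ independently of $M$), and only afterwards choose $N_1$ in response to the target $M$. Presenting the argument as the two nested tail conditions combining into one simultaneous inequality $b_n>\tfrac{c}{2a_n}>M$ for all sufficiently large $n$ keeps the logic transparent and avoids any accidental interchange of the universal and existential quantifiers.
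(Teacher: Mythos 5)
Your proof is correct and takes essentially the same route as the paper: both turn the hypothesis $\liminf\limits_{n\to\infty}(a_nb_n)\geq c$ into an eventual lower bound on $a_nb_n$, combine it with the eventual smallness of $a_n$, and conclude $b_n>M$ on the common tail. The only cosmetic difference is that you fix the tolerance at $\tfrac{c}{2}$ once and for all, whereas the paper lets it depend on the target (using $\tfrac{c}{L+1}$); both bookkeeping choices are equally valid.
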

\begin{proof}
Let us take $L>0$ arbitrarily large. Choose $n_1\in\N$ such that $a_n<\frac{c}{L+1}$ for all $n\geq n_1$. Again, $\exists ~n_2\in\N$ such that $a_nb_n>c-\frac{c}{L+1}$ for all $n\geq n_2$. Let $n_0=\max\{n_1,n_2\}$. Then for $n\geq n_0$ we have
$$
b_n>\frac{1}{a_n}(c-\frac{c}{L+1})>\frac{L+1}{c}(c-\frac{c}{L+1})=(L+1)-1=L,
$$
i.e. $\lim\limits_{n\to\infty}b_n=\infty$.
\end{proof}

\begin{proposition}\label{new1}
Let $\alpha =[0;a_1,a_2,...]$ be an irrational number with bounded $(a_n)$ and $(x_n)$ be the sequence defined as above i.e. by $\{rq_n:1\leq r\leq a_{n+1},n\in\N\}$. Then $t^s_{(q_n)}(\T)= t^s_{(x_n)}(\T)$.
\end{proposition}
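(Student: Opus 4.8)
The plan is to use that boundedness of $(a_n)$, say $M:=\sup_n a_n<\infty$, makes $(x_n)$ a \emph{bounded expansion} of $(q_n)$, so that statistical convergence transfers in both directions at the cost of only a bounded multiplicative factor. First I would record the block structure of $(x_n)$. For each $n$ set $B_n:=\{rq_n:1\le r\le a_{n+1}\}$. Since $q_{n+1}=a_{n+1}q_n+q_{n-1}$, the largest element $a_{n+1}q_n=q_{n+1}-q_{n-1}$ of $B_n$ is strictly below $q_{n+1}$, so $B_n\subseteq[q_n,q_{n+1})$. As the intervals $[q_n,q_{n+1})$ are pairwise disjoint, the blocks $B_n$ are pairwise disjoint and the increasing enumeration $(x_m)$ is exactly the concatenation $B_1,B_2,\dots$ with $|B_n|=a_{n+1}\le M$. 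In particular $q_n=x_{\phi(n)}$, where $\phi(n)=1+\sum_{k=1}^{n-1}a_{k+1}$ is strictly increasing and satisfies $n\le\phi(n)\le Mn$.

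For the inclusion $t^s_{(q_n)}(\T)\subseteq t^s_{(x_n)}(\T)$, take $x$ with $q_nx\to0$ statistically and fix $\varepsilon>0$. The key estimate is $\|x_mx\|=\|rq_nx\|\le r\|q_nx\|\le M\|q_nx\|$ whenever $x_m=rq_n\in B_n$; hence if $\|q_nx\|<\varepsilon/M$, then every element of $B_n$ is ``$\varepsilon$-good''. Thus the bad set $\{m:\|x_mx\|\ge\varepsilon\}$ is contained in $\bigcup_{n\in G}B_n$, where $G=\{n:\|q_nx\|\ge\varepsilon/M\}$ has $d(G)=0$ by hypothesis. Counting bad indices up to the end of the $N$-th block gives at most $M\,|G\cap[1,N]|$ of them among at least $N$ total indices, so the upper density (in $m$) of the bad set is at most $M\,\overline d(G)=0$; a routine check shows the bounded fluctuation inside a single block does not affect the limit. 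Hence $x_mx\to0$ statistically, i.e. $x\in t^s_{(x_n)}(\T)$.

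For the reverse inclusion $t^s_{(x_n)}(\T)\subseteq t^s_{(q_n)}(\T)$, take $x$ with $x_mx\to0$ statistically and fix $\varepsilon>0$; put $D=\{m:\|x_mx\|\ge\varepsilon\}$, so $\overline d(D)=0$. Because $q_n=x_{\phi(n)}$ with $\phi$ strictly increasing, one has the exact identity $\{n\le N:\|q_nx\|\ge\varepsilon\}=\{n\le N:\phi(n)\in D\}$, whose cardinality is at most $|D\cap[1,\phi(N)]|\le|D\cap[1,MN]|$. Dividing by $N$ and writing $\tfrac{|D\cap[1,MN]|}{N}=M\cdot\tfrac{|D\cap[1,MN]|}{MN}\to0$ shows $d(\{n:\|q_nx\|\ge\varepsilon\})=0$, that is, $q_nx\to0$ statistically and $x\in t^s_{(q_n)}(\T)$. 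Combining the two inclusions yields the asserted equality.

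The main obstacle, and the only place where boundedness of $(a_n)$ is genuinely needed, is the passage between the two densities: boundedness guarantees simultaneously that each block has length at most $M$ (so a single bad $q_n$ spoils only boundedly many $x_m$) and that $\phi(n)\le Mn$ (so density in the $m$-index at scale $MN$ controls density in the $n$-index at scale $N$). If $(a_n)$ were unbounded, a fixed $x$ could have $\|q_nx\|$ small while $\|rq_nx\|$ is large for some large $r\le a_{n+1}$, and $\phi(n)/n$ could blow up, so both transfers would break down; this is consistent with the strict containment $(3.3)$ recorded in the unbounded case.
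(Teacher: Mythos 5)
Your proof is correct, and it follows the paper's overall strategy: in each direction a density-zero exceptional set is transported across the re-indexing between $(q_n)$ and $(x_n)$, with boundedness $a_n\le M$ entering exactly twice — each block $\{rq_n:1\le r\le a_{n+1}\}$ has at most $M$ elements, and $\|rq_n\beta\|\le r\|q_n\beta\|\le M\|q_n\beta\|$. The execution, however, differs, most notably in the reverse inclusion. For $t^s_{(q_n)}(\T)\subseteq t^s_{(x_n)}(\T)$ the paper shows $d(A)=0$ for the transferred bad set $A$ by asserting that the density is maximized in the extremal configuration ($a_n=M$ at bad indices, $a_n=1$ elsewhere) and computing that single limit; your block count — at most $M\,|G\cap[1,N]|$ bad $x$-indices among at least $N$ indices up to the end of the $N$-th block, plus a bounded correction inside one block — reaches the same conclusion without that unproved extremality claim, so as written it is actually the tighter argument. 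For $t^s_{(x_n)}(\T)\subseteq t^s_{(q_n)}(\T)$ the routes genuinely diverge: the paper takes the set $B$ of positions of the $q_m$'s inside $(x_n)$, proves $\underline{d}(B)\ge 1/M$, and then invokes its folklore Lemma~\ref{new4} (if $\liminf a_nb_n\ge c>0$ and $a_n\to 0$, then $b_n\to\infty$) via the factorization $\frac{\phi_i}{n_{\phi_i}}=\frac{i}{n_{\phi_i}}\cdot\frac{\phi_i}{i}$ to conclude that the pulled-back bad set has density zero; you instead work with the explicit position function $\phi(n)=1+\sum_{k=1}^{n-1}a_{k+1}$, record the two-sided bound $n\le\phi(n)\le Mn$, and finish with the one-line estimate $|\{n\le N:\phi(n)\in D\}|\le |D\cap[1,MN]|$, which after dividing by $N$ tends to $0$ because $d(D)=0$. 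Your route is more elementary and quantitative — it dispenses with the auxiliary lemma and the liminf manipulation entirely and makes visible exactly where boundedness is used — while the paper's route packages the same growth information as a lower-density statement about $B$ at the cost of an extra lemma. Both are valid, and your closing remark about why each transfer fails for unbounded $(a_n)$ correctly identifies the phenomenon exploited in Proposition~\ref{p2result9}.
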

\begin{proof}
Let us take any $\beta\in t^s_{(q_n)}(\T)$. Let  $|a_n|\leq M~\forall n$ for some $M>0$. For $\varepsilon >0$ there is $B\subset\N$ with $d(B)=0$ such that whenever $n\notin B$, we have $\|q_n\beta\|<\frac{\varepsilon}{M}$.
Construct $A\subset\N$ as follows.
$$
 A:=\{m:x_m=rq_n;n\in B,1\leq r\leq a_{n+1}\}.
 $$
Let $B=\{n_1<n_2<...<n_i<...\}$. Clearly $d(A)$ attains its maximum value when $a_{n_i}=M$ for all $i$ and $a_n=1$ elsewhere. We show that $d(A)=0$ even in that case. Note that $d(B)=0\Rightarrow\lim\limits_{i\to\infty}\frac{i}{n_i}=0$. Then
$$
d(A)\leq \lim\limits_{i\to\infty}\frac{Mi}{(n_1-1)+M+(n_2-n_1-1)+M+...(n_i-n_{i-1})+M}
$$
$$
=\lim\limits_{i\to\infty}\frac{Mi}{Mi+(n_i-i)}=\lim\limits_{i\to\infty}\frac{1}{1+\frac{n_i-i}{Mi}}
=\lim\limits_{i\to\infty}\frac{1}{1+\frac{1}{M}(\frac{n_i}{i}-1)}=0.
$$
Now observe that s $m\notin A\Rightarrow x_m=rq_n$ for some $n\notin B$ and $1\leq r\leq a_{n+1}$, and consequently we obtain
$$
\|x_m\beta\|=\|rq_n\beta\|\leq r\|q_n\beta\|<M\frac{\varepsilon}{M}=\varepsilon,
$$
 whenever $m\notin A$. This implies that $\beta\in t^s_{(x_n)}(\T)$.

  On the other hand, let $\beta\in t^s_{(x_n)}(\T)$. Let $\varepsilon>0$ be given. Then $\exists ~A\subset\N$ with $d(A)=0$ with the property that whenever $n\notin A$, we have $\|x_n\beta\|<\varepsilon$. Let $B=\{n: x_n=q_m \mbox{~for some~} m\}$. As the sequence $(a_n)$ is bounded, let $|a_n|\leq M~\forall n$ for some $M>0$. A little computation gives us that $\underline{d}(B)\geq \frac{1}{M}$. Let $B=\{n_i:i\in\N\}$ and $B\cap A =\{n_{\phi_i}:i\in\N\}$, for some increasing sequences $(n_i)$ and $(\phi_i)$. Clearly we have $d(B\cap A)=\lim\limits_{i\to\infty}\frac{i}{n_{\phi_i}}=0$. Now we have $\liminf\limits_{i\to\infty}\frac{i}{n_i}\geq \frac{1}{M}$. So $\liminf\limits_{i\to\infty}\frac{\phi_i}{n_{\phi_i}}\geq \liminf\limits_{i\to\infty}\frac{i}{n_i}\geq\frac{1}{M}$. But $\frac{\phi_i}{n_{\phi_i}}=\frac{i}{n_{\phi_i}}.\frac{\phi_i}{i}$ for all $i$, and $\lim\limits_{i\to\infty}\frac{i}{n_{\phi_i}}=0$. Hence, by using Lemma \ref{new4} we conclude that $\lim\limits_{i\to\infty}\frac{\phi_i}{i}=\infty$, i.e. $\lim\limits_{i\to\infty}\frac{i}{\phi_i}=0$.

 Let $C=\{\phi_i:i\in\N\}$. Then we have $d(C)=0$ and it is now evident that whenever $n\notin C$, $\|q_n\beta\|<\varepsilon$. Hence $\beta\in t^s_{(q_n)}(\T)$. This completes the proof of our assertion.

\end{proof}

Finally we deal with the natural question that whether Proposition \ref{new1} remains valid for irrational numbers $\alpha$ for which the sequence $(a_n)$ happens to be unbounded and the next proposition answers it in negative.

\begin{proposition}\label{p2result9}
There exists $\alpha=[0;a_1,a_2,...]$, an irrational number with the sequence $(a_n)$ being unbounded for which $t^s_{(q_n)}(\T)\neq t^s_{(x_n)}(\T)$.
\end{proposition}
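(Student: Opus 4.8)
The plan is to exhibit a single irrational $\alpha$ together with an explicit $\beta\in\T$ lying in $t^s_{(q_n)}(\T)$ but not in $t^s_{(x_n)}(\T)$. The guiding idea is that passing from $(q_n)$ to $(x_n)$ replaces each level $n$ by a whole block $\{rq_n:1\le r\le a_{n+1}\}$ of length $a_{n+1}$, so when $a_{n+1}$ is enormous a single level contributes a huge proportion of the terms of $(x_n)$. I would arrange the partial quotients so that the levels carrying the mass of $\beta$ are exactly the levels with enormous $a_{n+1}$; then $\beta$ is statistically small against $(q_n)$ (only one term per such level), while against $(x_n)$ the multiples $rq_n$ spread $q_n\beta$ all around the circle and produce a set of large values of positive density.

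For the construction I would fix a sparse sequence of levels, say $n_i=i^2$, set $a_{n_i+1}=N_i$ with $N_i$ growing extremely fast (e.g. $N_i=2^{2^i}$), and put $a_m=1$ for every other index $m$. Then $\alpha=[0;a_1,a_2,\dots]$ is irrational with $(a_n)$ unbounded, $n_{i+1}-n_i\to\infty$, and $N_i$ dominates $\sum_{k<i}N_k+n_i$. Define $\beta=\sum_{i}\theta_{n_i}$, i.e. the element of $\T$ whose canonical expansion (Proposition \ref{2}) has $d_{n_i}=1$ and all other digits $0$; this is admissible since $d_{n_i}=1<N_i=a_{n_i+1}$, and the series converges because $|\theta_{n_i}|$ decays geometrically. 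Observe that $\supp(\beta)=\{n_i\}$ has natural density $0$.

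Membership $\beta\in t^s_{(q_n)}(\T)$ is then immediate from Proposition \ref{suff}, since $d(\supp(\beta))=0$; the real work is $\beta\notin t^s_{(x_n)}(\T)$. Fixing a level $n_i$ and $1\le r\le N_i$, I would split $rq_{n_i}\beta=rq_{n_i}\theta_{n_i}+\sum_{j\ne i}rq_{n_i}\theta_{n_j}$ and estimate the cross terms exactly as in Lemma \ref{p2result1} and Theorem \ref{p2result5}: for $j>i$ one bounds $|rq_{n_i}\theta_{n_j}|\le q_{n_i+1}|\theta_{n_j}|<1/N_j$, and for $j<i$ one uses $q_{n_i}\theta_{n_j}\equiv q_{n_j}\theta_{n_i}\pmod 1$ (Proposition \ref{1}(iii)) together with $N_i|\theta_{n_i}|<1/q_{n_i}$ to get $\|rq_{n_i}\theta_{n_j}\|<q_{n_j}/q_{n_i}<\lambda^{n_i-n_j}$. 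Hence the total cross contribution is at most some $\delta_i\to 0$. The main term is $rq_{n_i}\theta_{n_i}=\pm r\eta_i$ with $\eta_i=|q_{n_i}\theta_{n_i}|\in(\tfrac{1}{N_i+2},\tfrac{1}{N_i})$ by Proposition \ref{1}(i). As $r$ runs over $1,\dots,N_i$ the points $r\eta_i$ sweep across $(0,1)$ in steps of size $\eta_i\approx 1/N_i$, so at least $N_i/2-O(1)$ of them fall in $[\tfrac14,\tfrac34]$; for such $r$ and $i$ large, $\|rq_{n_i}\beta\|\ge\tfrac14-\delta_i\ge\tfrac18$.

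Finally I would count densities in the $x$-enumeration. All terms $x_m\le N_iq_{n_i}$ come from the complete blocks of levels $1,\dots,n_i$, so there are $M_i=\sum_{n\le n_i}a_{n+1}\le\sum_{k\le i}N_k+n_i$ of them, while at least $\sum_{k\le i}(N_k/2-O(1))$ carry $\|x_m\beta\|\ge\tfrac18$. Because $N_i$ dominates $\sum_{k<i}N_k+n_i$, the ratio of bad terms to $M_i$ tends to $\tfrac12$ along the subsequence of indices $M_i$, giving $\overline{d}(\{m:\|x_m\beta\|\ge\tfrac18\})\ge\tfrac12>0$; hence $(x_m\beta)$ does not converge to $0$ statistically and $\beta\notin t^s_{(x_n)}(\T)$. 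The main obstacle is this last step: keeping the cross-term error $\delta_i$ genuinely below $\tfrac18$ uniformly over the block while choosing $N_i$ so fast that the huge blocks dominate the $x$-count, and checking that collapsing $\{rq_n\}$ to the strictly increasing sequence $(x_n)$ (removing the few coincidences $rq_n=r'q_{n'}$) does not destroy the positive density — both are handled by the rapid growth of $N_i$ against the spacing $n_{i+1}-n_i$.
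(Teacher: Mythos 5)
Your proposal is correct, and its skeleton is the same as the paper's: concentrate the digit support of $\beta$ at levels immediately preceding huge partial quotients, deduce $\beta\in t^s_{(q_n)}(\T)$ from the sparseness of the support, and then show that the ``middle'' multiples $rq_{n_i}$ (with $r$ roughly between $\frac14 a_{n_i+1}$ and $\frac34 a_{n_i+1}$) land in a fixed interval bounded away from $0$ and $1$, so that these bad indices have positive upper density in the $x$-enumeration; indeed your sweep argument for $r\eta_i$ and the paper's window $\frac14 a_{t_i+2}\le r\le \frac34 a_{t_i+2}$ describe essentially the same set of multipliers, and the cross-term estimates via Proposition \ref{1}(ii),(iii) are identical in both. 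Where you genuinely diverge is in the quantitative setup, and your choice buys a real simplification. The paper takes $a_{n^2}=4n^2$ (large quotients at \emph{all} squares, growing only polynomially), supports $\beta$ on a tuned sub-subsequence with $\lim i/n_i\in(0,\frac12)$, and must then verify $\overline{d}(B)>0$ by an explicit, rather delicate limit computation balancing $\sum_k n_k^2$ against $\sum_k 4k^2$; membership is routed through Lemma \ref{p2result1} and the class $\mathbb{I}$. You instead make the quotients $N_i=2^{2^i}$ doubly exponential and place them only at the support levels $n_i=i^2$, so the last block dominates the entire count ($N_i\gg\sum_{j<i}N_j+n_i$), the positive-upper-density claim becomes a one-line limit along the block endpoints $M_i$, and membership follows at once from Proposition \ref{suff} since $d(\supp(\beta))=0$. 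Also, the two ``obstacles'' you flag at the end are in fact vacuous: there are no coincidences $rq_n=r'q_{n'}$, since block $n$ lies in $[q_n,a_{n+1}q_n]$ and $a_{n+1}q_n<q_{n+1}$, so consecutive blocks are strictly separated and the $x$-count up to block $n_i$ is exactly $\sum_{n\le n_i}a_{n+1}$; and the bound $\delta_i\to 0$ is automatically uniform over the block because your cross-term estimates only use $r\le a_{n_i+1}$. One caveat worth noting if this construction were pushed further: the paper's fixed $\alpha$ (with large quotients at all squares) is reused in Corollary \ref{new5} to produce $\mathfrak{c}$ many such $\beta$'s by varying the subsequence, and your $\alpha$, being tailored to a single support set, would require a similar check (taking sub-subsequences of $(n_i)$ still works, since the upper density bound only needs the dominant blocks belonging to the chosen subsequence), but for the proposition as stated your argument is complete.
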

\begin{proof}
Let $a_{n^2}=4n^2$ and $a_n=1$ elsewhere. Clearly $|(n+1)^2-n^2|=2n+1\to\infty$ as $n\to\infty$. Let us take any subsequence $(n_i^2)$ of $(n^2)$ such that $\lim\limits_{i\to\infty}\frac{i}{n_i}\in (0,\frac{1}{2})$. Such subsequence surely exists, e.g. take $n_i=\lfloor 3i\rfloor$ for $i\in\N$. Further, without any loss of generality we can take $n_1>1$. Now let us define for $i\in\N$, $s_i,t_i\in\N$ such that $s_1=1, t_1=n_1^2-2; s_i=n_i^2,t_i=n_{i+1}^2-2$ for all $i\geq 2$. Now we construct $\beta\in \T$ as follows.
$$
\beta=\sum\limits_{k=1}^{\infty}d_k\theta_k,
$$
where $(supp(\beta))^c=\{k:d_k=0\}=\bigcup\limits_{i=1}^{\infty}[s_i,t_i]$ and for $i\in\N$ $d_{t_i+1}=1$.  We will show that $\beta\in t^s_{(q_n)}(\T)\setminus t^s_{(x_n)}(\T)$.

 As $\lim\limits_{i\to\infty}|n_{i+1}^2-n_i^2|\geq \lim\limits_{n\to\infty}|(n+1)^2-n^2|=\infty$ so $\bigcup\limits_{i\in\N}[s_i,t_i]\in\I$ as in Lemma \ref{p2result1} and thus $\beta\in t^s_{(q_n)}(\T)$.
Now we show that $\beta\notin t^s_{(x_n)}(\T)$. Let us construct $B\subset\N$ such that
$$
B:=\{n\in\N:x_n=rq_{t_i+1};i\in\N,\frac{1}{4}a_{t_i+2}\leq r\leq \frac{3}{4}a_{t_i+2}\}.
$$
Our first claim is that $\overline{d}(B)>0$. To see this, observe that
$$
\overline{d}(B)=\lim\limits_{i\to\infty}\frac{2(n_1^2+n_2^2+...+n_i^2)}{4\sum\limits_{k=1}^{i}n_k^2+\sum\limits_{k=2}^{n_1-1}(4k^2+2k)
+\sum\limits_{k=n_1+1}^{n_2-1}(4k^2+2k)+...+\sum\limits_{k=n_{i-1}+1}^{n_i-1}(4k^2+2k)}
$$
$$
=\lim\limits_{i\to\infty}\frac{2(n_1^2+n_2^2+...+n_i^2)}{4\sum\limits_{k=2}^{n_i}k_2+2\sum\limits_{k=2}^{n_i-1}k}
=\lim\limits_{i\to\infty}\frac{2(n_1^2+n_2^2+...+n_i^2)}{n_i(n_i-1)-2+\frac{4n_i(n_i+1)(2n_i+1)}{6}-4}
$$
$$
=\lim\limits_{i\to\infty}\frac{2(\frac{n_1^2}{n_i^2}+\frac{n_2^2}{n_i^2}+...+\frac{n_i^2}{n_i^2})}{(1-\frac{1}{n_i}-\frac{6}{n_i^2})+\frac{2}{3}(2n_i+1)(1+\frac{1}{n_i})}
\geq \lim\limits_{i\to\infty}\frac{2(\frac{n_i^2}{n_i^2}+\frac{n_i^2}{n_i^2}+...+\frac{n_i^2}{n_i^2})}{1+\frac{2}{3}(2n_i+1)}
$$
$$
=\lim\limits_{i\to\infty}\frac{2(1+1+...+1)(i\mbox{~~times~~})}{1+\frac{2}{3}(2n_i+1)}
=\lim\limits_{i\to\infty}\frac{2i}{1+\frac{2}{3}(2n_i+1)}=\lim\limits_{i\to\infty}\frac{1}{\frac{5}{6i}+\frac{2}{3}\frac{n_i}{i}}
=\lim\limits_{i\to\infty}\frac{3}{2}\frac{i}{n_i}.
$$
Now as $\lim\limits_{i\to\infty}\frac{i}{n_i}\in (0,\frac{1}{2})$ so we get $\overline{d}(B)>0$.

 Finally, we show that there is some $B^\prime\subset\N$ with $\overline{d}(B^\prime)>0$ such that for $n\in B^\prime$, $|x_n\beta|\in[a,b]$ for some $0<a<b<1$  from which it will readily follow that $\|x_n\beta\|$ can not converge statistically to $0$. Let us take $\varepsilon>0$ arbitrarily small. Then for $n\in B$ and $j\in\N$
 $$
 x_n\beta=rq_{t_j+1}\sum\limits_{i=1}^{\infty}\theta_{t_i+1}=rq_{t_j+1}\theta_{t_j+1}
 =rq_{t_j+1}\sum\limits_{\stackrel{i\in\N}{i\neq j}}\theta_{t_i+1}.
 $$
 Now following the similar technique used in the proof of Lemma \ref{p2result1} and the last portion of Theorem \ref{p2result5}, it is easy to see that $\exists m,i_0\in\N$ for which
 $$
 j>i_0,i\in \N \mbox{~~and~~} i\neq j\Rightarrow |t_j-t_i|>2m,
 $$
  from which we eventually get that for all $j>i_0$,
 $$
 |rq_{t_j+1}\sum\limits_{\stackrel{i\in\N}{i\neq j}}\theta_{t_i+1}|\leq |rq_{t_j+1}\sum\limits_{\stackrel{i\in\N}{i< j}}\theta_{t_i+1}|+|rq_{t_j+1}\sum\limits_{\stackrel{i\in\N}{i>j}}\theta_{t_i+1}|<\varepsilon.
 $$
  Let
 $$
 B^\prime =\{n\in\N:x_n=rq_{t_i+1};i>i_0,\frac{1}{4}a_{t_i+2}\leq r\leq \frac{3}{4}a_{t_i+2}\}.
 $$
 As $(a_{n^2})$ is strictly increasing and tends to $\infty$, we have $d(B\setminus B^\prime)=0$ i.e. $\overline{d}(B^\prime)>0$.
 Now for $n\in B^\prime$
 $$
 |x_n\beta|\leq |rq_{t_j+1}\theta_{t_j+1}|
 +|rq_{t_j+1}\sum\limits_{\stackrel{i\in\N}{i\neq j}}\theta_{t_i+1}|
 < \frac{3}{4}a_{t_j+2}|q_{t_j+1}\theta_{t_j+1}|+\varepsilon <\frac{3}{4}a_{t_j+2}\frac{1}{a_{t_j+2}}+\varepsilon=\frac{3}{4}+\varepsilon.
 $$
On the other hand
$$
|x_n\beta|=|rq_{t_j+1}\theta_{t_j+1}
 +rq_{t_j+1}\sum\limits_{\stackrel{i\in\N}{i\neq j}}\theta_{t_i+1}|
\geq |rq_{t_j+1}\theta_{t_j+1}|
 -|rq_{t_j+1}\sum\limits_{\stackrel{i\in\N}{i\neq j}}\theta_{t_i+1}|
$$
$$
>|rq_{t_j+1}\theta_{t_j+1}|-\varepsilon \geq \frac{1}{4}a_{t_j+2}|q_{t_j+1}\theta_{t_j+1}|-\varepsilon
\geq \frac{1}{4}a_{t_j+2}\frac{1}{a_{t_j+2}+2}-\varepsilon
$$
$$
=\frac{1}{4}(1-\frac{1}{a_{t_j+2}+2})-\varepsilon
=\frac{1}{4}(1-\frac{1}{a_{n_{j+1}^2}+2})-\varepsilon\geq \frac{1}{4}(1-\frac{1}{a_4+2})-\varepsilon
$$
$$
=\frac{1}{4}(1-\frac{1}{18})-\varepsilon=\frac{17}{72}-\varepsilon.
$$
So for $n\in B^\prime$, $|x_n\beta|\in [\frac{17}{72}-\varepsilon ,\frac{3}{4}+\varepsilon ]$. Since $\varepsilon >0$ is arbitrary, hence it follows that $|x_n\beta|\in [\frac{17}{72},\frac{3}{4}]$ for all $n\in B^\prime$ which implies $\|x_n\beta\|$ can not converge statistically to $0$.

\end{proof}

We can actually say more about the difference $t^s_{(q_n)}(\T)\setminus t^s_{(x_n)}(\T)$.

\begin{corollary}\label{new5}
For the irrational number $\alpha$ used in Proposition \ref{p2result9}, $|t^s_{(q_n)}(\T)\setminus t^s_{(x_n)}(\T)|=\mathfrak{c}$.
\end{corollary}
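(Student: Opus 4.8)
The plan is to upgrade the single witness produced in Proposition \ref{p2result9} into a family of size $\mathfrak{c}$, in the same spirit in which Theorem C upgrades Theorem \ref{p2result5}, but with one extra precaution. In Theorem C one was free to pass to arbitrary subsequences; here that is \emph{not} allowed, because the argument of Proposition \ref{p2result9} rests on the density requirement $\lim_{i\to\infty}\frac{i}{n_i}\in(0,\frac12)$ (this is precisely what forces $\overline{d}(B)>0$), and a generic sub-subsequence would drive $\frac{i}{n_i}$ to $0$. So first I would isolate the observation that the whole of Proposition \ref{p2result9} — both $\beta\in t^s_{(q_n)}(\T)$ and $\beta\notin t^s_{(x_n)}(\T)$ — depends on the chosen sequence only through two facts: that $(n_i)$ is a strictly increasing sequence of integers with $n_1>1$ (whence $\bigcup_i[s_i,t_i]\in\mathbb{I}$, the gaps $n_{i+1}^2-n_i^2\ge 2n_i+1\to\infty$ being automatic), and that $\lim_{i\to\infty}\frac{i}{n_i}\in(0,\frac12)$. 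The underlying irrational $\alpha$ with $a_{n^2}=4n^2$ and $a_n=1$ otherwise is fixed throughout, so every estimate in Proposition \ref{p2result9} carries over verbatim from one witness to the next.

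Next I would produce a continuum-sized family of admissible sequences that all stay inside the density band. For each $c\in(0,\frac12)$ set $n_i^{(c)}=\lfloor i/c\rfloor$. Since $1/c>2$, this sequence is strictly increasing with $n_1^{(c)}\ge 2$, and $\lim_{i\to\infty}\frac{i}{n_i^{(c)}}=c\in(0,\frac12)$. Running the construction of Proposition \ref{p2result9} on $(n_i^{(c)})$ therefore yields, for each such $c$, an element $\beta^{(c)}\in t^s_{(q_n)}(\T)\setminus t^s_{(x_n)}(\T)$ whose support complement is the associated member of $\mathbb{I}$ and all of whose nonzero digits equal $1$.

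The distinctness of these witnesses is the only substantive point, and here the uniqueness of representation (Proposition \ref{2}) does the work. If $\beta^{(c)}=\beta^{(c')}$, then by uniqueness they have the same support, so the sets $\{(n_i^{(c)})^2-1\}$ and $\{(n_i^{(c')})^2-1\}$ coincide; each such set determines the increasing sequence $(n_i)$, and hence the limit $\lim_i\frac{i}{n_i}$, so $c=c'$. Thus $c\mapsto\beta^{(c)}$ is injective on $(0,\frac12)$, giving $\mathfrak{c}$ distinct elements of the difference, so that $|t^s_{(q_n)}(\T)\setminus t^s_{(x_n)}(\T)|\ge\mathfrak{c}$; the reverse inequality is trivial since $|\T|=\mathfrak{c}$.

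I expect the main obstacle to be exactly the tension flagged above: because admissibility lives in the open band $(0,\frac12)$ rather than being closed under taking subsequences, the Theorem C strategy of ``take all subsequences'' fails, and one must instead exhibit a parametrized continuum of sequences that remain inside the band while still yielding pairwise distinct supports. The choice $n_i^{(c)}=\lfloor i/c\rfloor$ meets both demands, and the only routine verifications are that the floor spoils neither strict monotonicity nor the density limit (both immediate from $1/c>2$) and that moving from a single $c$ to the whole interval leaves the fixed estimates of Proposition \ref{p2result9} untouched, which it does since $\alpha$ is never changed.
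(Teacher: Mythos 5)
Your proposal is correct and is essentially the paper's own argument: the paper also builds a continuum of admissible sequences $n_i=\lfloor\nu i\rfloor$ for $\nu>2$, which is exactly your family $n_i^{(c)}=\lfloor i/c\rfloor$ under $\nu=1/c$, and likewise concludes distinctness of the resulting $\beta$'s from the distinct supports. You merely make explicit what the paper leaves implicit, namely that admissibility (the density band $\lim_i i/n_i\in(0,\tfrac12)$) is not preserved under arbitrary subsequences and that injectivity follows from the uniqueness of the representation in Proposition \ref{2}.
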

\begin{proof}
In the proof of Proposition \ref{p2result9}, note that there are uncountably many of those subsequences $(n_i)$ which can be used to obtain the desired element $\beta$ and for different subsequences $(n_i)$, the resulting elements $\beta$'s are always distinct, e.g. for any $\nu >2$ take $n_i=\lfloor\nu i\rfloor$ for $i\in\N$.
\end{proof}

\noindent\textbf{Concluding Remarks.} In \cite{DDB}, while starting out the investigation of the new notion of statistically characterized subgroups, several open problems were suggested, mostly regarding arithmetic sequences. As we conclude our investigations of this article, apart from getting new results, we have also been able to present answer to one of the open questions posed there, namely \textbf{Question 6.4}. Further in view of Proposition \ref{p2result9} the following two questions naturally arises.

\begin{problem}\label{prob4}
Is the subgroup  $t^s_{(x_n)}(\T)$ in Proposition \ref{p2result9} countably infinite? If not, then does there exist any increasing sequence of positive integers $(y_n)$ such that $t^s_{(y_n)}(\T)$ is countably infinite?
\end{problem}

\begin{problem}\label{prob5}
Is the result (3.3) valid for s-characterized subgroups of $\T$ also i.e. for any irrational number $\alpha \in (0,1)$ with unbounded $(a_n)$, we have $t^s_{(q_n)}(\T) \varsupsetneq t^s_{(x_n)}(\T)$.
\end{problem}

If the answer to Problem \ref{prob5} is in affirmative then the following problem seems natural.

 \begin{problem}\label{prob6}
       Is it true that for any irrational number $\alpha \in (0,1)$ with unbounded $(a_n)$, $|t^s_{(q_n)}(\T) \setminus t^s_{(x_n)}(\T)| = \mathfrak{c}$.
\end{problem}




\end{document}